\newtheorem{theo}{Theorem}
\newtheorem{lem}{Lemma}
\newtheorem{prop}{Proposition}
\theoremstyle{definition}
\newtheorem{defn}{Definition}
\theoremstyle{remark}
\newtheorem{rem}{\bf Remark\/}
\numberwithin{equation}{section}
\definecolor{bronze}{rgb}{0.4, 0.7, 0.1}
\def\1{{\mathchoice {\rm 1\mskip-4mu l} {\rm 1\mskip-4mu l}{\rm 1\mskip-4.5mu l} {\rm 1\mskip-5mu l}}}
\newcommand{\ds}{\displaystyle}
\title{Berezin  transforms on modified Bergman spaces}
\author[N. Ghiloufi and S. Snoun]{Noureddine Ghiloufi and Safa Snoun}
\email{noureddine.ghiloufi@fsg.rnu.tn, snoun.safa@gmail.com}
\address{University of Gabes\\ Faculty of Sciences of Gabes\\ LR17ES11 Mathematics and Applications\\ 6072, Gabes, Tunisia.}
\subjclass[2020]{30H20, 47G10}
\keywords{modified Bergman spaces, Berezin transform, Bergman-Poincar\'e metric.}
\begin{document}

\begin{abstract}
    In this paper we study the continuity of the Berezin transform on modified Bergman spaces and we establish a Lipschitz estimate in terms of the Bergman-Poincar\'e metric.
\end{abstract}
\maketitle
\section{Introduction}
The Berezin transform $\mathbb B$ was introduced first by Berezin in \cite{Be} to solve some physical problems. This transformation has many applications in physics and mathematics, namely in complex analysis, it characterizes some functional spaces and the typical example is the so-called Besov spaces. Moreover, it plays a fundamental role in the study of many other operators (say Toeplitz and Hankel for example). For harmonic functions, it is necessary to replace the Poisson transformation with the Berezin transform in the context of classical Bergman spaces. Indeed, a function in the Bergman space will be harmonic if and only if it is a fixed point for $\mathbb B$.  This result can be deduced from the fact that $\mathbb B$ commutes with the Laplace operator $\Delta$ and every automorphism of the unit disk. \\
In recent years, the classical Bergman spaces were extended to what we call modified Bergman spaces so it is logical to study the corresponding Berezin transform on these new spaces. In fact, this will be the aim of this paper. The first problem that we face in this case is that the measures considered (and consequently the Berezin transform) are not invariant by the set of automorphisms of the disk; so the situation will be more difficult and techniques of proofs must be adopted for the present case. \\

Other than the introduction, the paper contains three sections, in the first one we give some preliminary results about the reproducing kernel  $\mathbb K_{\alpha,\beta}$ of the modified Bergman space $\mathcal A^2(\mathbb D,\mu_{\alpha,\beta})$. Then, in the second section, we study properties of the Berezin transform $\mathbb B_{\alpha,\beta}$. Namely, we prove the first main result of the paper:
\begin{theo}\label{th1}
Let $-1<\alpha,a,b<+\infty,\ -1<\beta\leq0$ and $1\leq p<+\infty$. Then the Berezin transform $\mathbb B_{\alpha,\beta}:L^p(\mathbb D,\mu_{a,b})\longrightarrow L^p(\mathbb D,\mu_{a,b})$ is bounded if and only if
$$p(\alpha+1)>a+1\quad and\quad \left\{
\begin{array}{lcl}
   b\leq \beta& if& p=1\\
   b< p(\beta+1)-1& if& p>1.
\end{array}
\right.$$
\end{theo}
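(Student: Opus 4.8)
The plan is to treat $\mathbb B_{\alpha,\beta}$ as the positive integral operator
\[
\mathbb B_{\alpha,\beta}f(z)=\int_{\mathbb D}\frac{|\mathbb K_{\alpha,\beta}(z,w)|^2}{\mathbb K_{\alpha,\beta}(z,z)}\,f(w)\,d\mu_{\alpha,\beta}(w)
\]
and to decide its $L^p(\mu_{a,b})$-boundedness by Schur's test when $p>1$, and by the elementary $L^1$ criterion when $p=1$. I write $b_{\alpha,\beta}(z,w)$ for the normalized Berezin kernel above, and I record that $b_{\alpha,\beta}(z,w)\,d\mu_{\alpha,\beta}(w)=b_{\alpha,\beta}(z,w)\rho(w)\,d\mu_{a,b}(w)$, where $\rho(w)=d\mu_{\alpha,\beta}/d\mu_{a,b}(w)$ equals a constant times $(1-|w|^2)^{\alpha-a}|w|^{2(\beta-b)}$. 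The two facts I borrow from Section~1 are the diagonal behaviour $\mathbb K_{\alpha,\beta}(z,z)\asymp(1-|z|^2)^{-(\alpha+2)}$ as $|z|\to1$ together with a finite positive limit at the origin, and an off-diagonal estimate giving $b_{\alpha,\beta}(z,w)\asymp(1-|z|^2)^{\alpha+2}|1-z\bar w|^{-2(\alpha+2)}$ near $\partial\mathbb D$ while $b_{\alpha,\beta}(z,w)\asymp1$ when $z,w$ stay near $0$; it is here that the hypothesis $-1<\beta\le0$ enters.

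For the sufficiency with $p>1$ I would apply Schur's test with the weight $h(z)=(1-|z|^2)^{s}|z|^{t}$ and conjugate exponent $p'$, seeking $s,t$ such that $\int b_{\alpha,\beta}(z,w)h(w)^{p'}d\mu_{\alpha,\beta}(w)\lesssim h(z)^{p'}$ and $\rho(w)\int b_{\alpha,\beta}(z,w)h(z)^{p}\,d\mu_{a,b}(z)\lesssim h(w)^{p}$. Splitting each integral into a boundary zone and an origin zone decouples the two parameters. In the boundary zone the $|z|^{t},|w|^{t}$ factors are inert and the standard Forelli--Rudin estimates reduce both inequalities to an interval condition on $sp'$ and $sp$; this interval is nonempty precisely when $p(\alpha+1)>a+1$, which is the classical boundary threshold. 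In the origin zone the factor $(1-|z|^2)$ is inert and each inequality becomes a one-dimensional power integral $\int_0^{1/2}r^{\cdots}\,dr$, whose convergence together with the matching of the $|z|^{t}$, $|w|^{t}$ homogeneities forces a range for $t$ that is nonempty exactly when $b<p(\beta+1)-1$. Choosing $(s,t)$ in the intersection of the two ranges yields both Schur inequalities and hence the boundedness.

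For $p=1$ Schur's test degenerates, so I would instead use that $\mathbb B_{\alpha,\beta}$ is bounded on $L^1(\mu_{a,b})$ if and only if $\sup_{w\in\mathbb D}\rho(w)\int_{\mathbb D}b_{\alpha,\beta}(z,w)\,d\mu_{a,b}(z)<\infty$; the boundary part of this supremum is finite iff $\alpha>a$ (i.e. $p(\alpha+1)>a+1$ at $p=1$) and the origin part is finite iff $b\le\beta$, the endpoint now being admissible because no room is lost through $p'=\infty$. For the necessity I would test on radial families pushed to the divergence threshold of the Berezin integral: taking $f_c(w)=(1-|w|^2)^{-c}$ shows $\mathbb B_{\alpha,\beta}f_c\equiv+\infty$ once $c\ge\alpha+1$, while $f_c\in L^p(\mu_{a,b})$ for $c<(a+1)/p$, and these two ranges overlap precisely when $p(\alpha+1)<a+1$; symmetrically, $g_c(w)=|w|^{-2c}$ (cut off away from $0$) gives $\mathbb B_{\alpha,\beta}g_c\equiv+\infty$ once $c\ge\beta+1$ while $g_c\in L^p(\mu_{a,b})$ for $c<(b+1)/p$, the overlap occurring precisely when $b>p(\beta+1)-1$. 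The two strict endpoints $p(\alpha+1)=a+1$ and, for $p>1$, $b=p(\beta+1)-1$ are not reached by these powers and I would rule them out by inserting a logarithmic factor into the test functions.

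The main obstacle, and where the modified setting truly departs from the classical one, is that $\mu_{\alpha,\beta}$ is not invariant under the M\"obius automorphisms of $\mathbb D$, so I cannot transport $z$ to the origin and collapse the kernel estimate onto the diagonal as in the Bergman case; the off-diagonal bounds on $\mathbb K_{\alpha,\beta}$ must be established and used directly. The analysis consequently bifurcates into a genuinely boundary regime and a genuinely origin regime, and the delicate point is to produce a single pair of Schur exponents $(s,t)$ valid in both at once: the compatibility of the boundary-driven constraint on $s$ with the origin-driven constraint on $t$ is what encodes the two inequalities of the theorem, and the asymmetry between the admissible endpoint at $p=1$ and the forbidden one at $p>1$ is exactly the place that demands the most care.
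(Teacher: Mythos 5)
Your sufficiency scheme is in fact the paper's: the Schur weights $h(z)=(1-|z|^2)^{s}|z|^{t}$ are exactly the family $h_{s,\tau}$ used there, and your $p=1$ criterion is the paper's supremum bound on the kernel $\omega(z,w)$. The genuine gap is in the second ``fact'' you borrow as input: the two-sided pointwise estimate $b_{\alpha,\beta}(z,w)\asymp(1-|z|^2)^{\alpha+2}|1-z\overline{w}|^{-2(\alpha+2)}$. The upper bound is fine ($G_{\alpha,\beta}$ extends continuously to $\overline{\mathbb D}$ and $G_{\alpha,\beta}\geq 1$ on $[0,1]$), but the lower bound is false in general: Lemma~\ref{l1}(5) controls $G_{\alpha,\beta}$ only on the real segment $[0,1]$, and for complex arguments the kernel can vanish \emph{inside} the disk. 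Concretely, for $\alpha=0$ one computes $G_{0,\beta}(\xi)=1-\frac{\beta}{1+\beta}\xi$, so for $\beta\in(-1,-1/2)$ the kernel $\mathcal K_{0,\beta}(\xi)=G_{0,\beta}(\xi)(1-\xi)^{-2}$ vanishes at $\xi=-(1+\beta)/|\beta|\in\mathbb D$ (at $\xi=-1/9$ when $\beta=-9/10$); this is precisely the phenomenon studied in the cited reference on zeros of these kernels. So no two-sided pointwise comparison with $|1-z\overline{w}|^{-2(\alpha+2)}$ can hold, and any step of your plan that needs a kernel \emph{lower} bound is unsupported.

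This matters in exactly one place, but it is a place you cannot avoid. Upper bounds alone suffice for your Schur test and for your $p>1$ necessity via the test functions $f_c$, $g_c$ with logarithmic refinements (a legitimate alternative to the paper's argument, which instead tests the adjoint $\mathbb B_{\alpha,\beta}^{*}$ on $(1-|z|^2)^{N}$ and on $1$). However, your $p=1$ necessity rests on ``the boundary part of the supremum is finite iff $\alpha>a$,'' i.e.\ on a \emph{lower} bound for $\rho(w)\int_{\mathbb D} b_{\alpha,\beta}(z,w)\,d\mu_{a,b}(z)$, and here the test-function route cannot rescue you: at the endpoint $\alpha=a$, $p=1$, every $f\in L^{1}(\mathbb D,\mu_{a,b})$ has $\mathbb B_{\alpha,\beta}f$ finite a.e.\ (the operator is everywhere defined yet unbounded), so pointwise divergence detects nothing and the two-sided asymptotics of the integrated kernel are indispensable. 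The paper's Lemma~\ref{l2} supplies them in a way that is immune to zeros of the kernel: integrating $|\mathcal K_{\alpha,\beta}(z\overline{w})|^{2}$ against a \emph{radial} measure kills all off-diagonal terms (Parseval in the angular variable gives exactly $\sum_{n}c_n^2 r^{2n}|w|^{2n}$ with $c_n=(\alpha+\beta+2)_n/(\beta+1)_n>0$), and Stirling then yields $\mathcal I_{c,d}(w)\approx\sum_{n}(n+1)^{\alpha-d-1}|w|^{2n}$ from both sides. Replace your pointwise estimate by this radial-integration argument and the rest of your plan goes through; without it, the $p=1$ case (and every norm lower bound) is unproved.
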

To improve a result due to B\'ekoll\'e, Berger, Coburn and Zhu (see \cite{BBCZ} or \cite{He-Ko-Zh}), we recall in the last section,  the Bergman-Poincar\'e metric introduced by the second author in \cite{Sn} and we use the Berezin transform $\mathbb B_{\alpha,\beta}$ to define the space BMO and finally we prove the second main result of the paper:
\begin{theo}\label{th2}
    For every $f\in BMO$, we have
    $$|\mathbb B_{\alpha,\beta}f(z)-\mathbb B_{\alpha,\beta}f(w)|\leq 2\|f\|_{BMO}\mathbf{d}_{\alpha,\beta}(z,w)
    $$
    for all $z,w\in\mathbb D$.
\end{theo}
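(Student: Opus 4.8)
The plan is to reduce the global Lipschitz bound to an infinitesimal gradient estimate and then integrate it along a $\mathbf{d}_{\alpha,\beta}$-geodesic. Write $\tilde f=\mathbb B_{\alpha,\beta}f$, let $k_z(u)=\mathbb K_{\alpha,\beta}(u,z)/\sqrt{\mathbb K_{\alpha,\beta}(z,z)}$ be the normalized reproducing kernel, and let $d\nu_z(u)=|k_z(u)|^2\,d\mu_{\alpha,\beta}(u)$ be the associated probability measure, so that $\tilde f(z)=\int_{\mathbb D}f\,d\nu_z$ and $\|f\|_{BMO}=\sup_{z}\bigl(\int_{\mathbb D}|f-\tilde f(z)|^2\,d\nu_z\bigr)^{1/2}$. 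Recalling that $\mathbf{d}_{\alpha,\beta}$ is the geodesic distance of the metric with line element $ds=\sqrt{G(z)}\,|dz|$, where $G(z)=\partial_z\partial_{\bar z}\log\mathbb K_{\alpha,\beta}(z,z)$, I would fix $z,w$, take a geodesic $\gamma\colon[0,1]\to\mathbb D$ joining $w$ to $z$, and write $\tilde f(z)-\tilde f(w)=\int_0^1\frac{d}{dt}\tilde f(\gamma(t))\,dt$. Since $\frac{d}{dt}\tilde f(\gamma(t))=\partial_z\tilde f\,\gamma'+\partial_{\bar z}\tilde f\,\overline{\gamma'}$, everything reduces to the pointwise bounds $|\partial_z\tilde f|,\,|\partial_{\bar z}\tilde f|\le\|f\|_{BMO}\sqrt{G}$.

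For the gradient estimate I would differentiate under the integral sign. Using that $\mathbb K_{\alpha,\beta}(u,z)$ is antiholomorphic in $z$ while $\mathbb K_{\alpha,\beta}(z,u)=\overline{\mathbb K_{\alpha,\beta}(u,z)}$ is holomorphic in $z$, one obtains $\partial_z\log|k_z(u)|^2=A(z,u)-B(z)$, where $A(z,u)=\partial_z\mathbb K_{\alpha,\beta}(z,u)/\mathbb K_{\alpha,\beta}(z,u)$ and $B(z)=\partial_z\mathbb K_{\alpha,\beta}(z,z)/\mathbb K_{\alpha,\beta}(z,z)$, the symbol $\partial_z$ acting on the first (holomorphic) variable. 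Hence $\partial_z\tilde f(z)=\int_{\mathbb D}f(u)\,[A(z,u)-B(z)]\,d\nu_z(u)$. Differentiating the normalization $\int d\nu_z=1$ gives $\int[A(z,u)-B(z)]\,d\nu_z(u)=0$, so I may center $f$ and write $\partial_z\tilde f(z)=\int_{\mathbb D}[f(u)-\tilde f(z)]\,[A(z,u)-B(z)]\,d\nu_z(u)$. Cauchy--Schwarz then yields $|\partial_z\tilde f(z)|\le\|f\|_{BMO}\,\bigl(\int_{\mathbb D}|A(z,u)-B(z)|^2\,d\nu_z(u)\bigr)^{1/2}$.

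The key identity, which I expect to be the main obstacle, is that the remaining factor is exactly the metric tensor:
$$\int_{\mathbb D}|A(z,u)-B(z)|^2\,d\nu_z(u)=G(z)=\partial_z\partial_{\bar z}\log\mathbb K_{\alpha,\beta}(z,z).$$
To establish it I would expand the square as $\int|A|^2\,d\nu_z-|B|^2$ (using $\int A\,d\nu_z=B$), cancel $|\mathbb K_{\alpha,\beta}(z,u)|^2$ against the weight $|k_z(u)|^2$, and recognize $\frac{1}{\mathbb K_{\alpha,\beta}(z,z)}\int_{\mathbb D}|\partial_z\mathbb K_{\alpha,\beta}(z,u)|^2\,d\mu_{\alpha,\beta}(u)$ as $\partial_z\partial_{\bar z}\mathbb K_{\alpha,\beta}(z,z)/\mathbb K_{\alpha,\beta}(z,z)$ by applying the reproducing property to the derivative kernel $\partial_z\mathbb K_{\alpha,\beta}(z,\cdot)$; subtracting $|B|^2$ then collapses the whole expression to $\partial_z\partial_{\bar z}\log\mathbb K_{\alpha,\beta}(z,z)$. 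The delicate points are the justification of differentiation under the integral sign and the reproducing identity for the derivative kernel in the modified space $\mathcal A^2(\mathbb D,\mu_{\alpha,\beta})$, which is exactly where the explicit form of $\mathbb K_{\alpha,\beta}$ obtained in the first section should be used.

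Combining these, $|\partial_z\tilde f(z)|\le\|f\|_{BMO}\sqrt{G(z)}$, and the conjugate computation gives the same bound for $|\partial_{\bar z}\tilde f(z)|$. Consequently $\bigl|\frac{d}{dt}\tilde f(\gamma(t))\bigr|\le 2\|f\|_{BMO}\sqrt{G(\gamma(t))}\,|\gamma'(t)|$, and integrating over $[0,1]$ along the geodesic gives $\int_0^1\sqrt{G(\gamma(t))}\,|\gamma'(t)|\,dt=\mathbf{d}_{\alpha,\beta}(z,w)$, whence $|\tilde f(z)-\tilde f(w)|\le 2\|f\|_{BMO}\,\mathbf{d}_{\alpha,\beta}(z,w)$. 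The constant $2$ is produced precisely by summing the two directional contributions coming from $\partial_z$ and $\partial_{\bar z}$.
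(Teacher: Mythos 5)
Your proposal is correct, and its overall architecture is the same as the paper's: reduce to an infinitesimal Lipschitz estimate along curves, bound the derivative of $\mathbb B_{\alpha,\beta}f$ by Cauchy--Schwarz against $MO(f)\leq\|f\|_{BMO}$, identify the remaining quadratic quantity with the metric density $\varrho_{\alpha,\beta}^2=\partial_z\partial_{\bar z}\log\mathcal K_{\alpha,\beta}(|z|^2)$, and integrate. Indeed, your centered kernel derivative $A(z,u)-B(z)$ is exactly the paper's projection complement $(I-P_{\gamma(t)})\bigl(\frac{d}{dt}\phi_{\gamma(t)}\bigr)$ written with Wirtinger derivatives, and your Cauchy--Schwarz step is the paper's Lemma \ref{l3}. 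Where you genuinely diverge is in the proof of the key identity $\int_{\mathbb D}|A(z,u)-B(z)|^2\,d\nu_z(u)=\varrho_{\alpha,\beta}^2(z)$, which is the content of the paper's Proposition: you obtain $\int|A|^2\,d\nu_z=\bigl(\mathcal K_{\alpha,\beta}'(|z|^2)+|z|^2\mathcal K_{\alpha,\beta}''(|z|^2)\bigr)/\mathcal K_{\alpha,\beta}(|z|^2)$ in one stroke from the derivative reproducing identity $\langle h,\partial_{\bar z}\mathbb K_{\alpha,\beta}(\cdot,z)\rangle_{\mu_{\alpha,\beta}}=h'(z)$ applied to $h=g_z$, $g_z(u)=u\mathcal K_{\alpha,\beta}'(u\bar z)$, whereas the paper computes $\|\mathscr A_t\|^2$ by expanding into a ${}_3F_2$ hypergeometric series and uses the reproducing property only for the cross term $\langle\mathscr A_t,\mathscr D_t\rangle$. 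Your route is cleaner and works verbatim in any reproducing-kernel space where the derivative kernel belongs to the space; that membership (which the paper also invokes, noting $z\mapsto z\mathcal K_{\alpha,\beta}'(z\overline{\gamma(t)})\in\mathcal A^2(\mathbb D,\mu_{\alpha,\beta})$) and the differentiation under the integral sign are precisely the points you would need to write out, and for $\beta\in\left]-1,0\right]$ they hold. Two small repairs: first, do not assume a minimizing geodesic exists --- the paper only asserts completeness of $(\mathbb D,\mathbf d_{\alpha,\beta})$ and $\varrho^2_{\alpha,\beta}\geq 0$, so integrate along arbitrary piecewise smooth curves and take the infimum at the end, exactly as the paper does, at no cost; second, since $\mathcal K_{\alpha,\beta}$ can vanish (this is the subject of the cited work on zeros of these kernels), $A(z,u)$ is undefined on the zero set of $u\mapsto\mathcal K_{\alpha,\beta}(z\bar u)$, which is harmless because that set is $\mu_{\alpha,\beta}$-null and the integrands $(A-B)\,d\nu_z$ and $|A-B|^2\,d\nu_z$ extend continuously across it, but this deserves a sentence.
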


\section{Preliminaries}
In \cite{Gh-Za} the authors introduced, for every $\alpha,\beta>-1$, the modified Bergman space  $\mathcal A^p(\mathbb D,\mu_{\alpha,\beta})$, of holomorphic functions on the punctured unit disk $\mathbb D^*$  that are $p-$integrable with respect to the probability measure $\mu_{\alpha,\beta}$ defined by  $$d\mu_{\alpha,\beta}(z):=\frac{|z|^{2\beta}(1-|z|^2)^\alpha }{\mathscr B(\alpha+1,\beta+1)}dA(z)$$ where $\mathscr B$ is the beta function given by $$\mathscr B(a,b)=\int_0^1x^{a-1}(1-x)^{b-1}dx=\frac{\Gamma(a)\Gamma(b)}{\Gamma(a+b)},\quad \forall\; a,b>0$$ and $$dA(z)=\frac{1}{\pi}dx dy=\frac{1}{\pi}rdrd\theta,\quad z=x+iy=re^{i\theta}.$$
In the particular case $p=2$, it is shown that $\left(\mathcal A^2(\mathbb D,\mu_{\alpha,\beta}),\langle .,.\rangle_{\mu_{\alpha,\beta}}\right)$ is a Hilbert space where the inner product $\langle .,.\rangle_{\mu_{\alpha,\beta}}$ is inherited from $L^2(\mathbb D,\mu_{\alpha,\beta})$:
$$\langle f,g\rangle_{\mu_{\alpha,\beta}}:=\int_{\mathbb D}f(z)\overline{g(z)}d\mu_{\alpha,\beta}(z),\quad \forall\; f,g\in L^2(\mathbb D,\mu_{\alpha,\beta}).$$
The reproducing kernel of the modified Bergman space $\mathcal A^2(\mathbb D,\mu_{\alpha,\beta})$ was determined later in \cite{Gh-Sn, AG-Sn} as $\mathbb K_{\alpha,\beta}(z,w)=\mathcal K_{\alpha,\beta}(z\overline{w})$ where
\begin{equation}\label{eq(2.1)}
\mathcal K_{\alpha,\beta}(\xi)=\ds\frac{(\beta_0+1)_s}{(\alpha+\beta_0+2)_s}\frac{1}{\xi^s}\ _2F_1\left(\left.
\begin{array}{cc}
1,&\alpha+\beta_0+2\\
& \beta_0+1
\end{array}\right|\xi\right)
\end{equation}

where $s=\inf\{k\in\mathbb N;\ \beta\leq k\}$ and $\beta=\beta_0+s$ (so  $\beta_0\in]-1,0]$).\\
Here $\ _2F_1$ is the Gauss hypergeometric function. Indeed, many results will be expressed in terms of the hypergeometric functions:
    $$_kF_j\left(\left.
\begin{array}{c}
a_1,\dots,a_k\\
b_1,\dots,b_j
\end{array}\right|\xi\right)=\sum_{n=0}^{+\infty}\frac{(a_1)_n\dots(a_k)_n}{(b_1)_n\dots(b_j)_n}\frac{\xi^n}{n!}$$
where $(a)_n=a(a+1)\dots(a+n-1)$ is the Pochhammer symbol for every $a\in\mathbb R$. In fact  $(a)_n=\Gamma(a+n)/\Gamma(a)$  for every $a\not\in\mathbb Z^-$. (see \cite{Ma-Ob-So} for more details about hypergeometric functions).\\

We will see that the study of the Berezin transform for any $\beta>-1$ is reduced to the study of it in the case $\beta=\beta_0\in]-1,0]$. So let us concentrate on this particular case.\\
Using a formula page 47 in \cite{Ma-Ob-So}, we have
\begin{equation}\label{eq(2.2)}
\ _2F_1\left(\left.
\begin{array}{cc}
1,&\alpha+\beta+2\\
& \beta+1
\end{array}\right|\xi\right) =\frac{\ _2F_1\left(\left.
\begin{array}{cc}
\beta,&-(\alpha+1)\\
& \beta+1
\end{array}\right|\xi\right)}{(1-\xi)^{\alpha+2}}=:\frac{G_{\alpha,\beta}(\xi)}{(1-\xi)^{\alpha+2}}
\end{equation}
Thus thanks to  \cite{Gh-Sn}, we obtain
$$G_{\alpha,\beta}(\xi)=\sum_{n=0}^{+\infty} \frac{\beta}{n+\beta}\frac{(-\alpha-1)_n}{n!}\xi^n=\sum_{n=0}^{+\infty} \frac{\beta}{n+\beta} {\alpha+1\choose n}(-\xi)^n.$$
Moreover, as we will see, the function $G_{\alpha,\beta}$ plays a fundamental role in the proofs. For this reason, we start here by collecting some of its properties.

\begin{lem}\label{l1}
For every $-1<\alpha,\beta<+\infty$ and $\xi\in\mathbb D$, we have
\begin{enumerate}
  \item\qquad  $\ds \xi G_{\alpha,\beta}'(\xi)=\beta\left((1-\xi)^{\alpha+1}-  G_{\alpha,\beta}(\xi)\right)$
  \item $\begin{array}{lcl}
           G_{\alpha+1,\beta}(\xi)&=&\ds \frac{1}{\alpha+\beta+2}\left(\xi(1-\xi)G_{\alpha,\beta}'(\xi)+(\alpha+\beta+2-\beta\xi)G_{\alpha,\beta}(\xi)\right)\\
           &=&\ds\frac{1}{\alpha+\beta+2}\left((\alpha+2)G_{\alpha,\beta}(\xi)+\beta(1-\xi)^{\alpha+2}\right)
         \end{array}$
  \item \qquad $\ds \xi G_{\alpha,\beta+1}(\xi)=\frac{\beta+1}{\alpha+\beta+2}\left(G_{\alpha,\beta}(\xi)-(1-\xi)^{\alpha+2}\right)$
  \item If $t\in[0,1]$ then we have $\ds (1-t)^{\alpha+2}\leq G_{\alpha,\beta+1}(t)\leq G_{\alpha,\beta}(t)$
  \item If $-1< \beta\leq 0$ then the function $G_{\alpha,\beta}$ is continuous and non-decreasing on $[0,1]$. Thus we obtain
$$\inf_{0\leq t\leq 1} G_{\alpha,\beta}(t)=1\quad \text{and}\quad \sup_{0\leq t\leq 1} G_{\alpha,\beta}(t)=G_{\alpha,\beta}(1)=(\alpha+1)\mathscr B(\alpha+1,\beta+1).$$
\end{enumerate}
\end{lem}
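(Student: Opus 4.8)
The plan is to treat the five assertions in order, using throughout the power-series definition of $G_{\alpha,\beta}$ together with the quotient form coming from \eqref{eq(2.2)}, namely
$$\frac{G_{\alpha,\beta}(t)}{(1-t)^{\alpha+2}}={}_2F_1(1,\alpha+\beta+2;\beta+1;t)=\sum_{n=0}^{+\infty}\frac{(\alpha+\beta+2)_n}{(\beta+1)_n}\,t^n,$$
whose coefficients are all strictly positive since $\alpha+\beta+2>0$ and $\beta+1>0$. This representation will be the workhorse for the inequalities in (4) and (5), while (1)--(3) are purely formal identities obtained by manipulating the defining series.

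For (1) I would differentiate the defining series term by term and write $\tfrac{n}{n+\beta}=1-\tfrac{\beta}{n+\beta}$; the first piece sums to $\beta\bigl((1-\xi)^{\alpha+1}-1\bigr)$ by the binomial theorem and the second reproduces $\beta\bigl(G_{\alpha,\beta}(\xi)-1\bigr)$, so the constants cancel and the claim follows. For (2) I would first check that the two right-hand sides coincide: substituting $\xi G_{\alpha,\beta}'$ from (1) into the first line and collecting the coefficient of $G_{\alpha,\beta}$, which simplifies to $\alpha+2$, turns it into the second line. It then remains to identify the second line with $G_{\alpha+1,\beta}$, which I would do by comparing the coefficient of $\xi^n$; this reduces to the Pochhammer identity $(\alpha+2-n)(-\alpha-2)_n=(\alpha+2)(-\alpha-1)_n$, a polynomial identity in $\alpha$ verified from $(-\alpha-2)_n=\frac{-\alpha-2}{n-\alpha-2}(-\alpha-1)_n$. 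Assertion (3) is handled the same way: comparing the coefficient of $\xi^m$ and using $(-\alpha-2)_m=(-\alpha-2)(-\alpha-1)_{m-1}$ together with $(-\alpha-1)_m=(m-\alpha-2)(-\alpha-1)_{m-1}$, the verification collapses to the arithmetic identity $\beta(m-\alpha-2)+(\alpha+2)(\beta+m)=m(\alpha+\beta+2)$.

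For (4) I would read both bounds off the positive-coefficient series above. Keeping only the $n=0$ term gives $G_{\alpha,\beta+1}(t)\geq(1-t)^{\alpha+2}$. For $G_{\alpha,\beta+1}(t)\leq G_{\alpha,\beta}(t)$ I would compare the series coefficient by coefficient: the ratio of the $n$-th coefficient of $G_{\alpha,\beta}$ to that of $G_{\alpha,\beta+1}$ equals $\frac{(\alpha+\beta+2)(\beta+1+n)}{(\alpha+\beta+2+n)(\beta+1)}$, which is $\geq1$ precisely because $\alpha+1\geq0$; multiplying through by $(1-t)^{\alpha+2}\geq0$ gives the bound on $[0,1)$, and letting $t\to1^-$ disposes of the endpoint.

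The delicate point is the monotonicity in (5). Continuity on $[0,1]$ follows because $G_{\alpha,\beta}={}_2F_1(\beta,-\alpha-1;\beta+1;\cdot)$ has $c-a-b=\alpha+2>0$, so its series converges absolutely, hence uniformly, up to $t=1$. For monotonicity I would use (1) in the form $G_{\alpha,\beta}'(t)=\frac{\beta}{t}\bigl((1-t)^{\alpha+1}-G_{\alpha,\beta}(t)\bigr)$; since $\beta\leq0$ and $t>0$, the sign hinges on the inequality $G_{\alpha,\beta}(t)\geq(1-t)^{\alpha+1}$. This is the main obstacle, because (4) only supplies the weaker bound with exponent $\alpha+2$. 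I would instead prove it directly from the positive series: writing $H(t)=\sum_n c_n t^n$ with $c_n=\frac{(\alpha+\beta+2)_n}{(\beta+1)_n}$, a telescoping gives $(1-t)H(t)=1+\sum_{n\geq1}(c_n-c_{n-1})t^n$, and $c_n-c_{n-1}=c_{n-1}\frac{\alpha+1}{\beta+n}>0$, whence $(1-t)H(t)\geq1$, i.e.\ $G_{\alpha,\beta}(t)\geq(1-t)^{\alpha+1}$. Thus $G_{\alpha,\beta}'\geq0$ and $G_{\alpha,\beta}$ is non-decreasing, so its infimum is $G_{\alpha,\beta}(0)=1$ and its supremum is $G_{\alpha,\beta}(1)$. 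Finally I would evaluate $G_{\alpha,\beta}(1)$ by Gauss's summation theorem, which (since $c-a-b=\alpha+2>0$) yields $\frac{\Gamma(\beta+1)\Gamma(\alpha+2)}{\Gamma(\alpha+\beta+2)}=(\alpha+1)\mathscr B(\alpha+1,\beta+1)$, completing the proof.
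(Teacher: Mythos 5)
Your proof is correct --- I checked the coefficient identities behind (2) and (3), the ratio computation in (4), and the telescoping bound in (5), and they all hold --- but it takes a genuinely different route from the paper's on the two substantive items. For (1)--(3) the difference is only organizational: the paper proves the first equality of (2) directly by rearranging the series and then deduces the second equality from (1), while you verify the second equality by coefficient comparison and recover the first from (1); the underlying identities, such as $\beta(m-\alpha-2)+(\alpha+2)(\beta+m)=m(\alpha+\beta+2)$, are the same. The real divergence is in (4) and (5). There the paper writes $\alpha=\alpha_0+s$ with $\alpha_0\in\,]-1,0]$, $s\in\mathbb N$, and argues by induction on $s$: the base case of (4) is a term-by-term positivity computation, the base case of (5) invokes a differentiation formula for ${}_2F_1$ from \cite{Ma-Ob-So}, and both inductive steps run through identity (2). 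You avoid induction entirely: for (4) you compare the coefficients $\frac{(\alpha+\beta+2)_n}{(\beta+1)_n}$ and $\frac{(\alpha+\beta+3)_n}{(\beta+2)_n}$ of the quotient series, whose ratio $\frac{(\alpha+\beta+2)(\beta+n+1)}{(\beta+1)(\alpha+\beta+n+2)}$ is $\geq1$ exactly because $n(\alpha+1)\geq0$; for (5) you reduce, via identity (1), to the lower bound $G_{\alpha,\beta}(t)\geq(1-t)^{\alpha+1}$ and prove it by telescoping $(1-t)\sum_n c_n t^n$ with $c_n-c_{n-1}=c_{n-1}\frac{\alpha+1}{\beta+n}>0$. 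This buys two things: the argument is uniform in $\alpha$ (no splitting off its integer part) and self-contained (no appeal to the external ${}_2F_1$ derivative formula); moreover your intermediate bound $G_{\alpha,\beta}(t)\geq(1-t)^{\alpha+1}$ is precisely the sharpening that the paper states without proof in the Remark following the lemma, so your argument delivers that remark for free. Finally, your evaluation of $G_{\alpha,\beta}(1)$ via Gauss's summation theorem replaces the paper's expansion of the Beta integral into a series, and your continuity argument (absolute convergence on $\overline{\mathbb D}$ since $c-a-b=\alpha+2>0$) is essentially the Stirling-type estimate the paper invokes; both give the same conclusions.
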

A part of this lemma was cited in \cite{Sn}. For the completeness of the paper, we give the complete proof.
\begin{proof}
Let  $-1<\alpha,\beta<+\infty$ and $\xi\in\mathbb D$.
\begin{itemize}
  \item The formula $(1)$ is simple. Indeed, $$\xi G_{\alpha,\beta}'(\xi)=\ds \beta\sum_{n=0}^{+\infty} \frac{n(-\alpha-1)_n}{n+\beta}\frac{\xi^n}{n!}=\beta\sum_{n=0}^{+\infty} \left(1-\frac{\beta}{n+\beta}\right)(-\alpha-1)_n\frac{\xi^n}{n!}=\beta\left((1-\xi)^{\alpha+1}-  G_{\alpha,\beta}(\xi)\right).$$
  \item To prove the first equality in $(2)$, we compute the term of the right-hand side:
    $$\begin{array}{lcl}
         R(\xi)&:=&\ds\xi(1-\xi)G_{\alpha,\beta}'(\xi)+(\alpha+\beta+2-\beta\xi)G_{\alpha,\beta}(\xi)\\
         &=&\ds \beta\left[\sum_{n=0}^{+\infty} \frac{n+\alpha+\beta+2}{n+\beta}(-\alpha-1)_n\frac{\xi^n}{n!}-\sum_{n=0}^{+\infty} (-\alpha-1)_n\frac{\xi^{n+1}}{n!}\right]\\
         &=&\ds \beta\left[\sum_{n=0}^{+\infty} \frac{n+\alpha+\beta+2}{n+\beta}(-\alpha-1)_n\frac{\xi^n}{n!}-\sum_{m=1}^{+\infty} (-\alpha-1)_{m-1}\frac{\xi^m}{(m-1)!}\right]\\
         &=&\ds \beta\sum_{n=0}^{+\infty} \frac{1}{n+\beta}\Bigl[(n+\alpha+\beta+2)(-\alpha-1)_n- n(n+\beta) (-\alpha-1)_{n-1}\Bigr]\frac{\xi^n}{n!}\\
         &=&\ds \beta\sum_{n=0}^{+\infty} \frac{(-\alpha-1)_{n-1}}{n+\beta}\Bigl[(n+\alpha+\beta+2)(n-\alpha-2)- n(n+\beta) \Bigr]\frac{\xi^n}{n!}\\
         &=&\ds \beta(\alpha+\beta+2)\sum_{n=0}^{+\infty} \frac{(-\alpha-2)_n}{n+\beta}\frac{\xi^n}{n!}\\
         &=&\ds (\alpha+\beta+2)G_{\alpha+1,\beta}(\xi).
      \end{array}
    $$
    While the second equality in $(2)$ is a simple consequence of $(1)$ and the first equality in $(2)$:
    $$\begin{array}{lcl}
           G_{\alpha+1,\beta}(\xi)&=&\ds \frac{1}{\alpha+\beta+2}\left(\xi(1-\xi)G_{\alpha,\beta}'(\xi)+(\alpha+\beta+2-\beta\xi)G_{\alpha,\beta}(\xi)\right)\\
           &\overset{(1)}=&\ds \frac{1}{\alpha+\beta+2}\left[\beta(1-\xi)\left((1-\xi)^{\alpha+1}-G_{\alpha,\beta}(\xi)\right)+(\alpha+\beta+2-\beta\xi) G_{\alpha,\beta}(\xi)\right]\\
           &=&\ds\frac{1}{\alpha+\beta+2}\left((\alpha+2)G_{\alpha,\beta}(\xi)+\beta(1-\xi)^{\alpha+2}\right)
         \end{array}$$
  \item Again to prove Equality $(3)$, we make a change of variable and we use the second equality in $(2)$ as follows:
    $$\begin{array}{lcl}
           \xi G_{\alpha,\beta+1}(\xi)&=&\ds (\beta+1)\sum_{n=0}^{+\infty}\frac{1}{n+\beta+1}(-\alpha-1)_n\frac{\xi^{n+1}}{n!}\qquad\qquad (m=n+1)\\
           &=&\ds (\beta+1)\sum_{m=1}^{+\infty}\frac{1}{m+\beta}(-\alpha-1)_{m-1}\frac{\xi^m}{(m-1)!}\\
           &=&\ds -\frac{\beta+1}{\alpha+2}\sum_{m=0}^{+\infty}\frac{m}{m+\beta}(-\alpha-2)_m\frac{\xi^m}{m!}\\
           &=&\ds -\frac{\beta+1}{\alpha+2}\left[\sum_{m=0}^{+\infty}(-\alpha-2)_m\frac{\xi^m}{m!}-\beta \sum_{m=0}^{+\infty}\frac{1}{m+\beta}(-\alpha-2)_m\frac{\xi^m}{m!}\right]\\
           &=&\ds -\frac{\beta+1}{\alpha+2}\Bigl((1-\xi)^{\alpha+2}-G_{\alpha+1,\beta}(\xi)\Bigr)\\
           &=&\ds\frac{\beta+1}{\alpha+\beta+2}\Bigl(G_{\alpha,\beta}(\xi)-(1-\xi)^{\alpha+2}\Bigr).
         \end{array}$$
  \item Using the Stirling formula, one can see that $G_{\alpha,\beta}$ is extendable continuously to $\overline{\mathbb D}$, hence it is continuous on $[0,1]$. To prove $(4)$ we start by taking $t\in[0,1[$ then using Formula \eqref{eq(2.2)}, we obtain
    $$1\leq \ _2F_1\left(\left.
        \begin{array}{cc}
            1,&\alpha+\beta+3\\
            & \beta+2
        \end{array}\right|t\right) =\frac{G_{\alpha,\beta+1}(t)}{(1-t)^{\alpha+2}}.$$
    It follows that $(1-t)^{\alpha+2}\leq G_{\alpha,\beta+1}(t)$ for every $t\in[0,1]$ and so the first inequality is proved. For the second inequality (of the right-hand side) in $(4)$, we set $h_{\alpha,\beta}(t)=G_{\alpha,\beta}(t)- G_{\alpha,\beta+1}(t)$ and we take $\alpha=\alpha_0+s$ where $-1<\alpha_0\leq0$ and $s\in\mathbb N$ and we proceed by induction on $s$ to prove that $h_{\alpha,\beta}\geq 0$ on $[0,1]$.
    \begin{itemize}
      \item If $\alpha=0$ then $h_{0,\beta}(t)=\frac{t}{(\beta+1)(\beta+2)}\geq0$. So we assume that $\alpha=\alpha_0\in]-1,0[$ (i.e. $s=0$). It follows that for every $t$,
        $$h_{\alpha,\beta}(t)=\sum_{n=1}^{+\infty}\left(\frac{\beta}{n+\beta}-\frac{\beta+1}{n+\beta+1}\right)(-\alpha-1)_n\frac{t^n}{n!} =\sum_{n=1}^{+\infty}\left(\frac{n(\alpha+1)}{(n+\beta)(n+\beta+1)}\right)\frac{\Gamma(n-\alpha-1)}{\Gamma(-\alpha)}\frac{t^n}{n!}\geq0.$$
      \item We assume now that $h_{\alpha,\beta}\geq 0$ for some $\alpha=\alpha_0+s$ and let we prove that $h_{\alpha+1,\beta}\geq 0$. Thanks to $(2)$ and the first inequality in $(4)$,
          $$\begin{array}{l}
               h_{\alpha+1,\beta}(t)=\ds G_{\alpha+1,\beta}(t)- G_{\alpha+1,\beta+1}(t)\\
               =\ds \frac{1}{\alpha+\beta+2}\left((\alpha+2)G_{\alpha,\beta}(t)+\beta(1-t)^{\alpha+2}\right) -\frac{1}{\alpha+\beta+3}\left((\alpha+2)G_{\alpha,\beta+1}(t)+(\beta+1)(1-t)^{\alpha+2}\right)\\
               =\ds \frac{\alpha+2}{\alpha+\beta+2}(G_{\alpha,\beta}(t)-G_{\alpha,\beta+1}(t)) +\frac{\alpha+2}{(\alpha+\beta+2)(\alpha+\beta+3)}(G_{\alpha,\beta+1}(t)-(1-t)^{\alpha+2})\\
               =\ds \frac{\alpha+2}{\alpha+\beta+2}\left(h_{\alpha,\beta}(t)+\frac{1}{\alpha+\beta+3}(G_{\alpha,\beta+1}(t)-(1-t)^{\alpha+2})\right)\geq0.
            \end{array}
          $$
    \end{itemize}
    This finishes the proof of the last inequality in $(4)$.
    \item Again as in the previous formula, we prove $(5)$  by induction on $s$.
        \begin{itemize}
            \item First case: $s=0$: so $\alpha=\alpha_0\in]-1,0]$. Using a formula on page 41 in \cite{Ma-Ob-So}, we obtain for every $t\in[0,1[$,
                $$G_{\alpha,\beta}'(t)=-\frac{\beta(\alpha+1)}{\beta+1}\ _2F_1\left(\left.
                    \begin{array}{cc}
                    \beta+1,&-\alpha\\
                    & \beta+2
                    \end{array}\right|t\right)> 0.$$
                This means that $G_{\alpha,\beta}$ is in fact increasing on $[0,1]$.
            \item Suppose that the function $G_{\alpha,\beta}$ is increasing on $[0,1]$ for some $\alpha>-1$. Thanks to Equality $(2)$, we  have
                $$G_{\alpha+1,\beta}(t)=\frac{1}{\alpha+\beta+2}\left((\alpha+2)G_{\alpha,\beta}(t)+\beta(1-t)^{\alpha+2}\right).$$
                By differentiating both sides, we obtain
                $$G'_{\alpha+1,\beta}(t)=\frac{1}{\alpha+\beta+2}\left((\alpha+2)G'_{\alpha,\beta}(t)-\beta(\alpha+2) (1-t)^{\alpha+1}\right).$$
                It follows that $G_{\alpha+1,\beta}$ is also increasing on $[0,1]$.
        \end{itemize}
        We conclude so that $G_{\alpha,\beta}$ is increasing on $[0,1]$ for every $\alpha>-1$. Hence, for every  $t\in[0,1]$ we have $$1=G_{\alpha,\beta}(0)\leq G_{\alpha,\beta}(t)\leq G_{\alpha,\beta}(1).$$
        To finish the proof we claim that the last equality $G_{\alpha,\beta}(1)=(\alpha+1)\mathscr B(\alpha+1,\beta+1)$  is so simple. Indeed,
        $$\begin{array}{lcl}
            \mathscr B(\alpha+1,\beta+1)&= &\ds \int_0^1t^\beta(1-t)^\alpha dt= \ds \sum_{n=0}^{+\infty}(-1)^n{\alpha \choose n } \int_0^1t^{\beta+n} dt\\
             &=&\ds \sum_{n=0}^{+\infty}\frac{(-1)^n}{n+\beta+1}{\alpha\choose  n } =-\sum_{m=1}^{+\infty}\frac{(-1)^m}{m+\beta}{\alpha\choose  m-1}\\
             &=&-\ds\sum_{m=1}^{+\infty}m\frac{(-1)^m}{m+\beta}{\alpha+1\choose  m}=\frac{1}{\alpha+1}\sum_{m=0}^{+\infty}\frac{\beta(-1)^m}{m+\beta}{\alpha+1\choose  m}\\
             &=&\ds \frac{1}{\alpha+1}G_{\alpha,\beta}(1).
        \end{array}
        $$
\end{itemize}
\end{proof}
\begin{rem}
    It is simple to see that for every $t\in[0,1]$ we have $\lim_{\beta\to+\infty}G_{\alpha,\beta}(t)=(1-t)^{\alpha+1}$. Thus we can replace $(4)$ in Lemma \ref{l1} by  $\ds (1-t)^{\alpha+1}\leq G_{\alpha,\beta+1}(t)\leq G_{\alpha,\beta}(t)$ for every $t\in[0,1]$.
\end{rem}

\section{Berezin transform}
Our aim now is to define the Berezin transform  $\mathbb B_{\alpha,\beta}$ and give some of its properties, and then to prove Theorem \ref{th1}.
\subsection{Definition and properties of  $\mathbb B_{\alpha,\beta}$}
\begin{defn}
For every $-1<\alpha,\beta<+\infty$ and every measurable function $f$ on the unit disk $\mathbb D$,  the Berezin transform $\mathbb B_{\alpha,\beta}$ of $f$ is given by
$$\mathbb B_{\alpha,\beta}f(z)=\int_{\mathbb D}f(w)\frac{|\mathbb K_{\alpha,\beta}(w,z)|^2}{\|\mathbb K_{\alpha,\beta}(.,z)\|^2_{2,\mu_{\alpha,\beta}}} d\mu_{\alpha,\beta}(w)
$$
where $\| .\|_{2,\mu_{\alpha,\beta}}$ is the norm of $L^2(\mathbb D,\mu_{\alpha,\beta})$
\end{defn}
It is well known that $\|\mathbb K_{\alpha,\beta}(.,z)\|^2_{2,\mu_{\alpha,\beta}}=\mathbb K_{\alpha,\beta}(z,z)=\mathcal K_{\alpha,\beta}(|z|^2)$. Thus using \eqref{eq(2.1)}, it is simple to obtain that the Berezin transform is $\mathbb B_{\alpha,\beta}f=\mathbb B_{\alpha,\beta_0}f$.
Thus we assume from now that $\beta=\beta_0\in]-1,0]$. In  this case one can see that   $\mathbb B_{\alpha,\beta}$ is well defined on $L^1(\mathbb D,\mu_{\alpha,\beta})$.\\

\begin{prop}
The Berezin transform $\mathbb B_{\alpha,\beta}$ is one-to-one  on $L^1(\mathbb D,\mu_{\alpha,\beta})$.
\end{prop}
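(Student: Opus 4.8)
The plan is to exploit the linearity of $\mathbb B_{\alpha,\beta}$ to reduce the statement to the implication $\mathbb B_{\alpha,\beta}f\equiv0\Rightarrow f=0$ in $L^1(\mathbb D,\mu_{\alpha,\beta})$, and then to turn the vanishing of $\mathbb B_{\alpha,\beta}f$ into the vanishing of all the mixed moments of $f$. Since the normalizing factor $\|\mathbb K_{\alpha,\beta}(\cdot,z)\|_{2,\mu_{\alpha,\beta}}^2=\mathcal K_{\alpha,\beta}(|z|^2)$ is a strictly positive real number for each $z\in\mathbb D$, the hypothesis $\mathbb B_{\alpha,\beta}f\equiv0$ is equivalent to
$$F(z):=\int_{\mathbb D}f(w)\,|\mathbb K_{\alpha,\beta}(w,z)|^2\,d\mu_{\alpha,\beta}(w)=0\qquad\text{for all }z\in\mathbb D.$$
Recall from \eqref{eq(2.1)}--\eqref{eq(2.2)} that in the range $\beta=\beta_0\in\,]-1,0]$ one has $\mathcal K_{\alpha,\beta}(\xi)=\sum_{n\ge0}a_n\xi^n$ with $a_n=(\alpha+\beta+2)_n/(\beta+1)_n>0$; in particular the $a_n$ are real, so $\overline{\mathcal K_{\alpha,\beta}(w\overline z)}=\mathcal K_{\alpha,\beta}(\overline w z)$ and hence $|\mathbb K_{\alpha,\beta}(w,z)|^2=\sum_{n,m\ge0}a_na_m\,w^n\overline w^{\,m}\,\overline z^{\,n}z^m$.

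Next I would justify interchanging this double sum with the integral defining $F$. For a fixed $z\in\mathbb D$, since $|w|<1$ and $a_n\ge0$,
$$\sum_{n,m\ge0}a_na_m|z|^{n+m}\int_{\mathbb D}|f(w)|\,|w|^{n+m}\,d\mu_{\alpha,\beta}(w)\le\Big(\int_{\mathbb D}|f|\,d\mu_{\alpha,\beta}\Big)\,\mathcal K_{\alpha,\beta}(|z|)^2<\infty,$$
so Fubini's theorem applies and yields $F(z)=\sum_{n,m\ge0}a_na_m\,c_{n,m}\,\overline z^{\,n}z^m$, where $c_{n,m}:=\int_{\mathbb D}f(w)\,w^n\overline w^{\,m}\,d\mu_{\alpha,\beta}(w)$. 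Writing $z=re^{i\theta}$ gives $\overline z^{\,n}z^m=r^{n+m}e^{i(m-n)\theta}$; grouping the absolutely convergent series by the angular frequency $k=m-n$ and using the uniqueness of Fourier coefficients in $\theta$, then of the Taylor coefficients of the resulting power series in $r$, the identity $F\equiv0$ forces $a_na_m\,c_{n,m}=0$ for all $n,m$. Because $a_n>0$, we obtain $c_{n,m}=0$, that is $\int_{\mathbb D}f\,w^n\overline w^{\,m}\,d\mu_{\alpha,\beta}=0$ for all $n,m\ge0$.

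To conclude, the polynomials in $w$ and $\overline w$ form a conjugation-stable subalgebra of $C(\overline{\mathbb D})$ that separates points, hence is uniformly dense by the Stone--Weierstrass theorem. As $\mu_{\alpha,\beta}$ is a finite measure and $f\in L^1(\mathbb D,\mu_{\alpha,\beta})$, the functional $g\mapsto\int_{\mathbb D}fg\,d\mu_{\alpha,\beta}$ is continuous on $C(\overline{\mathbb D})$ and vanishes on this dense subalgebra, so it vanishes identically; thus the complex measure $f\,d\mu_{\alpha,\beta}$ annihilates $C(\overline{\mathbb D})$ and is therefore the zero measure, which forces $f=0$ $\mu_{\alpha,\beta}$-almost everywhere. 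The only genuinely delicate points are the Fubini justification and the coefficient extraction, both of which are routine once the strict positivity of the $a_n$ and the finiteness of $\mu_{\alpha,\beta}$ are noted.
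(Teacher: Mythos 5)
Your proof is correct and takes essentially the same route as the paper: both arguments reduce injectivity to showing that $\mathbb B_{\alpha,\beta}f\equiv 0$ forces all mixed moments $\int_{\mathbb D}f(w)\,w^k\overline{w}^n\,d\mu_{\alpha,\beta}(w)$ to vanish --- the paper by differentiating $u_{\alpha,\beta}(z)=\mathcal K_{\alpha,\beta}(|z|^2)\mathbb B_{\alpha,\beta}f(z)$ at the origin, you by expanding $|\mathbb K_{\alpha,\beta}(w,z)|^2$ into its double power series and matching Fourier--Taylor coefficients, which is the same computation in different clothing --- and then both conclude by density of polynomials in $w,\overline w$. If anything, your last step (Stone--Weierstrass in $C(\overline{\mathbb D})$ applied to the finite complex measure $f\,d\mu_{\alpha,\beta}$) spells out more carefully what the paper compresses into the phrase ``density of polynomials in $L^1(\mathbb D,\mu_{\alpha,\beta})$''.
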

\begin{proof}
Since $\mathbb B_{\alpha,\beta}$ is linear on $L^1(\mathbb D,\mu_{\alpha,\beta})$, then to prove the result it suffices to consider $f\in L^1(\mathbb D,\mu_{\alpha,\beta})$ such that $\mathbb B_{\alpha,\beta}f= 0$ and we prove that $f=0$. As
$$\mathbb B_{\alpha,\beta}f(z)=\frac{1}{\mathcal K_{\alpha,\beta}(|z|^2)}\int_{\mathbb D}f(w)\left|\mathcal K_{\alpha,\beta}(z\overline{w})\right|^2d\mu_{\alpha,\beta}(w)
$$
Then if we set
$$u_{\alpha,\beta}(z)=\mathcal K_{\alpha,\beta}(|z|^2)\mathbb B_{\alpha,\beta}f(z)=\int_{\mathbb D}f(w)\mathcal K_{\alpha,\beta}(\overline{z}w)\mathcal K_{\alpha,\beta}(z\overline{w})d\mu_{\alpha,\beta}(w),
$$
we obtain
$$\frac{\partial^{n+k}u_{\alpha,\beta}}{\partial z^n\partial \overline{z}^k}(0)=\frac{n!k!(\alpha+\beta+2)_n(\alpha+\beta+2)_k}{(\beta+1)_n(\beta+1)_k}\int_{\mathbb D}f(w)\overline{w}^nw^kd\mu_{\alpha,\beta}(w)=0,\quad \forall\; n,k\geq0.
$$
It follows that
$$\int_{\mathbb D}f(w)\overline{w}^nw^kd\mu_{\alpha,\beta}(w)=0,\quad \forall\; n,k\geq0.
$$
The density of polynomials in  $L^1(\mathbb D,\mu_{\alpha,\beta})$ gives that $f=0$, and the injectivity of the Berezin transform is proved.
\end{proof}
If we denote by $\mathcal C(\overline{\mathbb D})$ the space of continuous functions on $\overline{\mathbb D}$ and $\mathcal C_0(\overline{\mathbb D})$ the subspace of continuous functions on $\overline{\mathbb D}$ that vanish on the boundary $\partial \mathbb D$ of $\mathbb D$ then we have the following result:
\begin{prop}
If $f\in\mathcal C(\overline{\mathbb D})$ then $\mathbb B_{\alpha,\beta}f\in\mathcal C(\overline{\mathbb D})$ and $f-\mathbb B_{\alpha,\beta}f\in\mathcal C_0(\overline{\mathbb D})$.
\end{prop}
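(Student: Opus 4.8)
The plan is to exploit that, for each fixed $z\in\mathbb D$, the Berezin transform integrates $f$ against the measure
$$d\nu_z(w):=\frac{|\mathbb K_{\alpha,\beta}(w,z)|^2}{\|\mathbb K_{\alpha,\beta}(\cdot,z)\|_{2,\mu_{\alpha,\beta}}^2}\,d\mu_{\alpha,\beta}(w)=\frac{|\mathcal K_{\alpha,\beta}(z\overline w)|^2}{\mathcal K_{\alpha,\beta}(|z|^2)}\,d\mu_{\alpha,\beta}(w),$$
which is a probability measure, $\nu_z(\mathbb D)=1$, directly from the definition of the $L^2(\mathbb D,\mu_{\alpha,\beta})$-norm together with $\|\mathbb K_{\alpha,\beta}(\cdot,z)\|^2_{2,\mu_{\alpha,\beta}}=\mathcal K_{\alpha,\beta}(|z|^2)$; in particular $\mathbb B_{\alpha,\beta}1=1$. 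The proof then splits into two parts: continuity of $\mathbb B_{\alpha,\beta}f$ on the open disk, and the existence of a continuous extension to $\partial\mathbb D$ whose boundary values equal $f$. Throughout I use the representation $\mathcal K_{\alpha,\beta}(\xi)=G_{\alpha,\beta}(\xi)(1-\xi)^{-(\alpha+2)}$ coming from \eqref{eq(2.1)}--\eqref{eq(2.2)} in the reduced case $\beta=\beta_0\in\,]-1,0]$.

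For continuity on $\mathbb D$, I would fix $z_0\in\mathbb D$ and a closed ball $\overline{B(z_0,\rho)}\subset\mathbb D$ on which $|z|\le r_1<1$. Since $\mathcal K_{\alpha,\beta}$ is holomorphic on $\mathbb D$ it is bounded on $\{|\xi|\le r_1\}$, so $|\mathcal K_{\alpha,\beta}(z\overline w)|\le C_{r_1}$ for all such $z$ and all $w\in\mathbb D$; moreover $\mathcal K_{\alpha,\beta}(|z|^2)=G_{\alpha,\beta}(|z|^2)(1-|z|^2)^{-(\alpha+2)}\ge1$ by Lemma \ref{l1}(5). Hence the integrand is dominated by the fixed $\mu_{\alpha,\beta}$-integrable function $C_{r_1}^2\,|f(w)|$, and since it is continuous in $z$, dominated convergence yields $\mathbb B_{\alpha,\beta}f(z)\to\mathbb B_{\alpha,\beta}f(z_0)$.

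The heart of the argument is the boundary behavior. Fix $\zeta_0\in\partial\mathbb D$; I would show $\mathbb B_{\alpha,\beta}f(z)\to f(\zeta_0)$ as $z\to\zeta_0$ inside $\mathbb D$ by writing, thanks to $\nu_z(\mathbb D)=1$,
$$\mathbb B_{\alpha,\beta}f(z)-f(\zeta_0)=\int_{\mathbb D}\bigl(f(w)-f(\zeta_0)\bigr)\,d\nu_z(w).$$
Using the bounds $G_{\alpha,\beta}(|z|^2)\ge1$ and $|G_{\alpha,\beta}(z\overline w)|\le M_0:=\sup_{\overline{\mathbb D}}|G_{\alpha,\beta}|<\infty$ (both from Lemma \ref{l1}(5) and the continuous extendability of $G_{\alpha,\beta}$ to $\overline{\mathbb D}$), the density is controlled by the classical concentration factor
$$\frac{|\mathcal K_{\alpha,\beta}(z\overline w)|^2}{\mathcal K_{\alpha,\beta}(|z|^2)}\le M_0^2\,\frac{(1-|z|^2)^{\alpha+2}}{|1-z\overline w|^{2(\alpha+2)}}.$$
For $|w-\zeta_0|\ge\delta$ and $z$ with $|z-\zeta_0|<\delta/2$, the identity $|1-\zeta_0\overline w|=|\zeta_0-w|$ (valid since $|\zeta_0|=1$) gives $|1-z\overline w|\ge\delta/2$, so on this region the density is at most $M_0^2(\delta/2)^{-2(\alpha+2)}(1-|z|^2)^{\alpha+2}$; as $\mu_{\alpha,\beta}$ is finite, this forces $\nu_z\bigl(\{|w-\zeta_0|\ge\delta\}\bigr)\to0$ when $z\to\zeta_0$. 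I would then split the integral at $|w-\zeta_0|=\delta$: given $\varepsilon>0$, uniform continuity of $f$ on $\overline{\mathbb D}$ picks $\delta$ with $|f(w)-f(\zeta_0)|<\varepsilon$ on the inner region (contributing at most $\varepsilon\,\nu_z(\mathbb D)=\varepsilon$), while on the outer region $|f(w)-f(\zeta_0)|\le2\|f\|_\infty$ is multiplied by the vanishing mass. Letting $z\to\zeta_0$ and then $\varepsilon\to0$ gives $\mathbb B_{\alpha,\beta}f(z)\to f(\zeta_0)$.

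Finally, declaring $\mathbb B_{\alpha,\beta}f=f$ on $\partial\mathbb D$ produces a function continuous at every interior point (previous step) and at every boundary point: a sequence in $\overline{\mathbb D}$ tending to $\zeta_0$ splits into an interior part handled by the boundary limit and a boundary part handled by continuity of $f$, both converging to $f(\zeta_0)$. Thus $\mathbb B_{\alpha,\beta}f\in\mathcal C(\overline{\mathbb D})$, and since it coincides with $f$ on $\partial\mathbb D$ we obtain $f-\mathbb B_{\alpha,\beta}f\in\mathcal C_0(\overline{\mathbb D})$. I expect the main obstacle to be the boundary concentration estimate, whose success hinges precisely on the uniform control of $G_{\alpha,\beta}$ afforded by Lemma \ref{l1}(5) and on the decay of $(1-|z|^2)^{\alpha+2}/|1-z\overline w|^{2(\alpha+2)}$ away from $\zeta_0$.
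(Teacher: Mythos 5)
Your proof is correct, but it follows a genuinely different route from the paper. The paper makes the M\"obius change of variables $w=\varphi_z(\zeta)=\frac{z-\zeta}{1-\zeta\overline z}$, rewrites $\mathbb B_{\alpha,\beta}f(z)$ as an integral over $\zeta$ of $f(\varphi_z(\zeta))\left|G_{\alpha,\beta}(\varphi_z(\zeta)\overline z)\right|^2|\varphi_z(\zeta)|^{2\beta}(1-|\zeta|^2)^\alpha$, and then lets $z\to\xi$ using the pointwise convergence $\varphi_z(\zeta)\to\xi$ together with dominated convergence; the boundary value comes out equal to $f(\xi)$ only after invoking the exact evaluation $G_{\alpha,\beta}(1)=(\alpha+1)\mathscr B(\alpha+1,\beta+1)$ from Lemma \ref{l1}(5). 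You instead never move the measure: you treat $d\nu_z$ as an approximate identity concentrating at $\zeta_0$, split the disk at $|w-\zeta_0|=\delta$, and combine the automatic normalization $\nu_z(\mathbb D)=1$ with the kernel decay $|\mathcal K_{\alpha,\beta}(z\overline w)|^2/\mathcal K_{\alpha,\beta}(|z|^2)\lesssim (1-|z|^2)^{\alpha+2}/|1-z\overline w|^{2(\alpha+2)}$ away from $\zeta_0$. This buys several things: you bypass the delicate domination issue in the paper's argument (for $\beta<0$ the factor $|\varphi_z(\zeta)|^{2\beta}$ is unbounded near $\zeta=z$, and the measure $\mu_{\alpha,\beta}$ is precisely \emph{not} M\"obius-invariant, which is the difficulty the introduction emphasizes); you need only the two-sided bounds $1\le G_{\alpha,\beta}\le M_0$ rather than the exact value $G_{\alpha,\beta}(1)$, since the probability-measure property replaces the constant computation; and you explicitly verify continuity of $\mathbb B_{\alpha,\beta}f$ at interior points, a step the paper's proof leaves tacit. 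The paper's approach, in exchange, produces a transformed integral formula for $\mathbb B_{\alpha,\beta}f$ that is of independent use and stays closer to the classical Bergman-space argument. Both proofs are complete for the statement as claimed.
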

\begin{proof}
Using  Formula \eqref{eq(2.2)} and the change of variable $w=\varphi_z(\zeta)=\frac{z-\zeta}{1-\zeta \overline{z}}$ (the M\"obius transform of the unit disk), we obtain
$$\begin{array}{lcl}
     \mathbb B_{\alpha,\beta}f(z)&=&\ds \frac{1}{\mathcal K_{\alpha,\beta}(|z|^2)}\int_{\mathbb D}f(w)\left|\mathcal K_{\alpha,\beta}(w\overline{z})\right|^2d\mu_{\alpha,\beta}(w)\\
 &=&\ds \frac{(1-|z|^2)^{\alpha+2}}{G_{\alpha,\beta}(|z|^2)}\int_{\mathbb D}f(w)\frac{\left|G_{\alpha,\beta}(w\overline{z})\right|^2}{|1-w\overline{z}|^{2(\alpha+2)}}d\mu_{\alpha,\beta}(w)\\
&=&\ds \frac{1}{\mathscr B(\alpha+1,\beta+1)}\frac{1}{G_{\alpha,\beta}(|z|^2)}\int_{\mathbb D}f(\varphi_z(\zeta))\left| G_{\alpha,\beta}(\varphi_z(\zeta)\overline{z})\right|^2|\varphi_z(\zeta)|^{2\beta}(1-|\zeta|^2)^\alpha dA(\zeta)\\
  \end{array}
$$

It is well known that for every $\xi\in\partial\mathbb D$, we have $\varphi_z(\zeta)$ tends to $\xi$ as $z\to \xi$ for every $\zeta\in\mathbb D$. Hence using the continuity of $f$ at $\xi$ and the last equality in (5), Lemma \ref{l1},  we obtain
$$\lim_{z\to\xi}\mathbb B_{\alpha,\beta}f(z)=\frac{G_{\alpha,\beta}(1)}{\mathscr B(\alpha+1,\beta+1)}f(\xi)\int_{\mathbb D}(1-|\zeta|^2)^\alpha dA(\zeta)=f(\xi).
$$
\end{proof}

\subsection{Proof of Theorem \ref{th1}}
To prove Theorem \ref{th1}, we need the following lemma:

\begin{lem}\label{l2}
For every  $c>-1,\ d>-\alpha-3$ and $w\in\mathbb D$, we set
$$\mathcal I_{c,d}(w):=\int_{\mathbb D}\frac{\left|\mathcal K_{\alpha,\beta}(z\overline{w})\right|^2}{\mathcal K_{\alpha,\beta}(|z|^2)}|z|^{2c}(1-|z|^2)^ddA(z).
$$
Then
$$\mathcal I_{c,d}(w)\approx \left\{
\begin{array}{lcl}
  1&if& \alpha<d\\
   \ds\log\left(\frac{1}{1-|w|^2}\right)&if& \alpha=d\\
   \ds\frac{1}{(1-|w|^2)^{\alpha-d}}&if& \alpha>d.
\end{array}\right.
$$
as $|w| \to1$.
\end{lem}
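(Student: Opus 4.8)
The plan is to exploit the rotational symmetry of $\mathcal I_{c,d}$ to replace the complex argument $z\overline w$ by a real one; this turns the problem into an explicit one–dimensional integral whose behaviour as $|w|\to1$ is classical, and it sidesteps any need to control $|\mathcal K_{\alpha,\beta}|$ (equivalently $|G_{\alpha,\beta}|$) at complex arguments. First I would observe that $\mathcal I_{c,d}(w)$ depends only on $\rho:=|w|$: the substitution $z\mapsto e^{i\arg w}z$ leaves $|z|$, $dA$ and $(1-|z|^2)$ invariant while turning $z\overline w$ into $\rho z$, so we may take $w=\rho\in[0,1[$. Writing $\mathcal K_{\alpha,\beta}(\xi)=\sum_{n\ge0}k_n\xi^n$ with $k_n=\frac{(\alpha+\beta+2)_n}{(\beta+1)_n}$ (from \eqref{eq(2.1)} with $s=0$, since $\beta=\beta_0\in]-1,0]$), I pass to polar coordinates $z=te^{i\theta}$ and integrate in $\theta$; the off–diagonal terms vanish, giving $\int_0^{2\pi}|\mathcal K_{\alpha,\beta}(\rho te^{i\theta})|^2\,d\theta=2\pi\sum_{n\ge0}k_n^2(\rho t)^{2n}$. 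After the substitution $x=t^2$ and the use of $1/\mathcal K_{\alpha,\beta}(x)=(1-x)^{\alpha+2}/G_{\alpha,\beta}(x)$ coming from \eqref{eq(2.2)}, this reduces $\mathcal I_{c,d}$ to
$$\mathcal I_{c,d}(\rho)=\int_0^1\frac{H(\rho^2x)}{G_{\alpha,\beta}(x)}\,x^c(1-x)^{\alpha+2+d}\,dx,\qquad H(y):=\sum_{n\ge0}k_n^2\,y^n.$$

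The next step is to replace the two unknown factors by their sizes. By Stirling, $k_n\sim C\,n^{\alpha+1}$, hence $k_n^2\approx n^{2\alpha+2}$ and therefore $H(y)\approx (1-y)^{-(2\alpha+3)}$ uniformly on $[0,1[$, the function $y\mapsto H(y)(1-y)^{2\alpha+3}$ being continuous and positive with a positive limit at $1$. Since $-1<\beta\le0$, part $(5)$ of Lemma~\ref{l1} gives $1\le G_{\alpha,\beta}(x)\le G_{\alpha,\beta}(1)$ on $[0,1]$, so the $G_{\alpha,\beta}$ factor is comparable to a positive constant. As every quantity in sight is positive, these substitutions yield a genuine two–sided estimate
$$\mathcal I_{c,d}(\rho)\approx \int_0^1\frac{x^c(1-x)^{\alpha+2+d}}{(1-\rho^2x)^{2\alpha+3}}\,dx=:J(\rho).$$

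Finally I would read off the three regimes from $J$. Its Euler integral representation identifies $J(\rho)=\mathscr B(c+1,\alpha+d+3)\,{}_2F_1(2\alpha+3,\,c+1;\,\alpha+c+d+4;\,\rho^2)$, the hypotheses $c>-1$ and $d>-\alpha-3$ being exactly what make the two Beta parameters $c+1$ and $\alpha+d+3$ positive. The behaviour of this ${}_2F_1$ as $\rho^2\to1^-$ is governed by the sign of the parameter gap $(\alpha+c+d+4)-(2\alpha+3)-(c+1)=d-\alpha$: Gauss's theorem gives a finite nonzero limit when $d>\alpha$, a logarithmic divergence $\approx\log\frac{1}{1-\rho^2}$ when $d=\alpha$, and a power divergence $\approx(1-\rho^2)^{d-\alpha}$ when $d<\alpha$, by the standard connection formulae near the unit argument (cf. \cite{Ma-Ob-So}). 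Since $\rho^2=|w|^2$, these are precisely the three asserted estimates. I expect the only delicate point to be the uniform comparability $H(y)\approx(1-y)^{-(2\alpha+3)}$, i.e. pinning down the coefficient asymptotics of $\mathcal K_{\alpha,\beta}^2$ with two–sided bounds; once the rotation reduction is made, everything else is routine and, crucially, no lower bound on $G_{\alpha,\beta}$ at complex arguments is ever required.
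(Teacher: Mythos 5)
Your proof is correct, and it rests on the same two pillars as the paper's: the two\-/sided bound $1\leq G_{\alpha,\beta}\leq G_{\alpha,\beta}(1)$ from Lemma~\ref{l1}(5), applied only at the real argument $|z|^2$ (the paper, like you, never needs to control $G_{\alpha,\beta}$ at complex arguments, because the factor $|\mathcal K_{\alpha,\beta}(z\overline w)|^2$ is handled by orthogonality of monomials), and Stirling's formula for Pochhammer asymptotics. Where you genuinely diverge is the order of approximation and integration. The paper integrates first: it evaluates $\mathcal J_{c,d}(w)=\int_{\mathbb D}|\mathcal K_{\alpha,\beta}(z\overline{w})|^2|z|^{2c}(1-|z|^2)^{\alpha+2+d}dA(z)$ exactly as $\mathscr B(c+1,\alpha+d+3)$ times a $_4F_3$ in $|w|^2$ (upper parameters $1,\alpha+\beta+2,\alpha+\beta+2,c+1$, lower parameters $\beta+1,\beta+1,\alpha+c+d+4$), then applies Stirling to the coefficients of that series to get $\mathcal I_{c,d}(w)\approx\sum_{n}(n+1)^{\alpha-d-1}|w|^{2n}$, and concludes with the elementary three\-/regime estimate for such power series. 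You approximate first: Stirling gives $H(y)\approx(1-y)^{-(2\alpha+3)}$ uniformly on $[0,1[$ (legitimate, since every $k_n>0$ and $k_n\sim Cn^{\alpha+1}$ with $2\alpha+3>0$, so the comparison is two\-/sided), after which the surviving integral is an Euler integral for a $_2F_1$, and you read off the three regimes from its boundary behaviour at argument $1$ according to the sign of the parameter gap $d-\alpha$. The two routes are quantitatively equivalent---your $_2F_1$ and the paper's $_4F_3$ have coefficients of the same size $n^{\alpha-d-1}$, so they are interchangeable---but each buys something: the paper's ordering never invokes $_2F_1$ connection formulas (the logarithmic case $d=\alpha$ falls out of the crude series estimate for free), while yours eliminates the $_4F_3$ entirely and stays inside classical $_2F_1$ theory, at the price of quoting Gauss's theorem and the boundary asymptotics, including the logarithmic case when the gap vanishes. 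Both arguments are complete, and your hypotheses $c>-1$, $d>-\alpha-3$ enter exactly as in the paper, to make the Beta factor finite.
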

Here we set $u\lesssim v$ to say that there exists $c>0$ such that $u\leq cv$ and $u\approx v$  if $u\lesssim v$ and $v\lesssim u$.
\begin{proof}
Using the function $G_{\alpha,\beta}$ defined above, we obtain
$$\ds\mathcal I_{c,d}(w)=\ds \int_{\mathbb D}\frac{\left|\mathcal K_{\alpha,\beta}(z\overline{w})\right|^2}{\mathcal K_{\alpha,\beta}(|z|^2)}|z|^{2c}(1-|z|^2)^ddA(z)
=\ds \int_{\mathbb D}\frac{\left|\mathcal K_{\alpha,\beta}(z\overline{w})\right|^2}{G_{\alpha,\beta}(|z|^2)}|z|^{2c}(1-|z|^2)^{\alpha+2+d}dA(z)
$$
Using Lemma \ref{l1}, we obtain
$$\frac{1}{G_{\alpha,\beta}(1)}\mathcal J_{c,d}(w)\leq \mathcal I_{c,d}(w)\leq \mathcal J_{c,d}(w)$$
where
$$\begin{array}{lcl}
     \mathcal J_{c,d}(w)&=&\ds\int_{\mathbb D}\left|\mathcal K_{\alpha,\beta}(z\overline{w})\right|^2|z|^{2c}(1-|z|^2)^{\alpha+2+d}dA(z)\\
&=&\ds  \mathscr B(c+1,\alpha+d+3)\ _4F_3\left(\left.
\begin{array}{c}
1,\ \alpha+\beta+2,\ \alpha+\beta+2,\ c+1\\
\beta+1,\ \beta+1,\ \alpha+c+d+4
\end{array}\right||w|^2\right).
  \end{array}
$$

Now using the Stirling formula, for $|w|\to1$ we find
$$\mathcal I_{c,d}(w)\approx\mathcal J_{c,d}(w)\approx\sum_{n=0}^{+\infty} (n+1)^{\alpha-d-1}|w|^{2n}
$$
and the result follows from the fact that
$$\sum_{n=0}^{+\infty} (n+1)^{\alpha-d-1}|w|^{2n}\approx \left\{
\begin{array}{lcl}
  1&if& \alpha-d<0\\
   \ds\log\left(\frac{1}{1-|w|^2}\right)&if&\alpha-d=0\\
   \ds\frac{1}{(1-|w|^2)^{\alpha-d}}&if& \alpha-d>0.
\end{array}\right.
$$
\end{proof}
Now we can prove  Theorem \ref{th1} that consists to prove that:\\
\emph{The Berezin transform $\mathbb B_{\alpha,\beta}:L^p(\mathbb D,\mu_{a,b})\longrightarrow L^p(\mathbb D,\mu_{a,b})$ is bounded if and only if
$$p(\alpha+1)>a+1\quad and\quad \left\{
\begin{array}{lcl}
   b\leq \beta& if& p=1\\
   b< p(\beta+1)-1& if& p>1.
\end{array}
\right.$$}
\begin{proof} The proof will be done in two steps:
\begin{itemize}
  \item We assume first that the Berezin transform $\mathbb B_{\alpha,\beta}:L^p(\mathbb D,\mu_{a,b})\longrightarrow L^p(\mathbb D,\mu_{a,b})$ is bounded. Let $q$ be the conjugate exponent of $p$; i.e. $\frac{1}{p}+\frac{1}{q}=1$ ($q=\infty$ if $p=1$). we deduce so that adjoint operator $\mathbb B_{\alpha,\beta}^*$ of $\mathbb B_{\alpha,\beta}$ with respect to the dual action induced by the inner product $\langle .,.\rangle_{\mu_{a,b}}$ maps continuously $L^q(\mathbb D,\mu_{a,b})$ into itself.\\
      It is simple to see that $\mathbb B_{\alpha,\beta}^*$ is given by
      $$\mathbb B_{\alpha,\beta}^*g(w)=|w|^{2(\beta-b)}(1-|w|^2)^{\alpha-a}\int_{\mathbb D}g(z)\frac{\left|\mathcal K_{\alpha,\beta}(z\overline{w})\right|^2}{\mathcal K_{\alpha,\beta}(|z|^2)}d\mu_{a,b}(z).
$$
\begin{itemize}
  \item First case: If $p>1$ then by taking $g_N(z)=(1-|z|^2)^N$ for $N$ large enough and applying Lemma \ref{l2}, we obtain
$$\mathbb B_{\alpha,\beta}^*g_N(w)\approx|w|^{2(\beta-b)}(1-|w|^2)^{\alpha-a}\mathcal I_{b,a+N}(w)\approx|w|^{2(\beta-b)}(1-|w|^2)^{\alpha-a}.$$

Thus $\mathbb B_{\alpha,\beta}^*g_N\in L^q(\mathbb D,\mu_{a,b})$ gives that $p(\beta+1)>1+b$ and $p(\alpha+1)>1+a$.
  \item Second case: If $p=1$ then applying $\mathbb B_{\alpha,\beta}^*$ to the constant function $g\equiv 1$ and using again Lemma \ref{l2}, we obtain $\alpha>a$ and $\beta\geq b$.
\end{itemize}
  \item Conversely, assume that
    $$p(\alpha+1)>a+1\quad and\quad \left\{
        \begin{array}{lcl}
            b\leq \beta& if& p=1\\
            b< p(\beta+1)-1& if& p>1.
        \end{array}\right.$$
     Let we prove that  $\mathbb B_{\alpha,\beta}$ is bounded on $L^p(\mathbb D,\mu_{a,b})$. If $p=1$ then Lemma \ref{l2} gives that $\mathbb B_{\alpha,\beta}$ is bounded on $L^p(\mathbb D,\mu_{a,b})$ with norm
     $$\|\mathbb B_{\alpha,\beta}\|_{\mathscr L(L^1(\mathbb D,\mu_{a,b}))}\leq \sup_{w\in\mathbb D}\int_{\mathbb D}\omega(z,w)d\mu_{a,b}(z)$$
     where
     $$\omega(z,w)=\frac{\mathscr B(a+1,b+1)}{\mathscr B(\alpha+1,\beta+1)}|w|^{2(\beta-b)}(1-|w|^2)^{\alpha-a}\frac{\left|\mathcal K_{\alpha,\beta}(z\overline{w})\right|^2} {\mathcal K_{\alpha,\beta}(|z|^2)}.$$
     For $p>1$ (and $q$ its conjugate), we  consider the function $h_{s,\tau}$ defined by  $$h_{s,\tau}(z)=\frac{1}{|z|^{2s}(1-|z|^2)^\tau}.$$
     If the parameters $s$ and $\tau$ satisfy the following conditions
     $$\frac{b-\beta}{p}<s<\frac{b+1}{p}\quad\text{and}\quad\frac{a-\alpha}{p}<\tau<\frac{a+\alpha+3}{p},$$
      then thanks to Lemma \ref{l2},
     $$\int_{\mathbb D}h_{s,\tau}(z)^p\omega(z,w)d\mu_{a,b}(z)\approx|w|^{2(\beta-b)}(1-|w|^2)^{\alpha-a}\mathcal I_{b-sp,a-\tau p}(w) \approx\frac{|w|^{2(\beta-b)}(1-|w|^2)^{\alpha-a}}{(1-|w|^2)^{\alpha-a+tq}}.
     $$
     It follows that $$\int_{\mathbb D}h_{s,\tau}(z)^p\omega(z,w)d\mu_{a,b}(z)\lesssim\frac{|w|^{2(\beta-b)}(1-|w|^2)^{a-\alpha}}{(1-|w|^2)^{\alpha-(a-\tau p}} \lesssim h_{s,\tau}(w)^p.
     $$
     Now, if $$0<s<\frac{\beta+1}{q}\quad\text{and}\quad -\frac{\alpha+2}{q}<\tau<\frac{\alpha+1}{q},$$ then thanks to Lemma \ref{l1} and using the same techniques as in the proof of Lemma \ref{l2},  we obtain
     $$\begin{array}{lcl}
         \ds\int_{\mathbb D}h_{s,\tau}(w)^q\omega(z,w)d\mu_{a,b}(w)&\approx&\ds \int_{\mathbb D}|w|^{2(\beta-sq)}(1-|w|^2)^{\alpha-\tau p}\frac{\left|\mathcal K_{\alpha,\beta}(z\overline{w})\right|^2}{\mathcal K_{\alpha,\beta}(|z|^2)}d\mu_{a,b}(w)\\
&\approx&\ds (1-|z|^2)^{\alpha+2}\sum_{n=0}^{+\infty}\frac{(\alpha+\beta+2)_n(\beta-qs+1)_n}{(\beta+1)_n(\alpha+\beta+2-qs-q\tau)_n}|z|^{2n}\\
&\approx&\ds\frac{1}{(1-|z|^2)^{\tau q}}\lesssim h_{s,\tau}(z)^q
       \end{array}
     $$
     Using the assumptions, one can see that
     $$\left]\frac{b-\beta}{p},\frac{b+1}{p}\right[\cap\left]0,\frac{\beta+1}{q}\right[\neq\emptyset\quad \text{and}\quad \left]\frac{a-\alpha}{p},\frac{a+\alpha+3}{p}\right[\cap\left]-\frac{\alpha+2}{q},\frac{\alpha+1}{q}\right[\neq\emptyset.
     $$
This shows the existence of $s,\tau$ such that the function $h_{s,\tau}$ satisfies Schur's test. Hence we conclude the continuity of the Berezin transform and so the proof of Theorem \ref{th1} is achieved.
\end{itemize}
\end{proof}
\begin{rem}
    If $u\in L^2(\mathbb D,d\mu_{\alpha,\beta})$ is a harmonic function on $\mathbb D$ then $\mathbb B_{\alpha,\beta} u=u$.  This result is not true in general for harmonic functions on $\mathbb D^*$.
\end{rem}
Indeed, the first statement can be deduced from the fact that any harmonic function $u$ on $\mathbb D$ can be written as  $u=f+\overline{g}$ where $f$ and $g$ are holomorphic on $\mathbb D$. While for the second statement, if we take $v(z)=\log|z|^2$ then by a simple computation, one can show that $$\mathbb B_{0,\beta} v(z)= \frac{1-|z|^2
}{\beta+1-\beta|z|^2}.$$
Thus $\mathbb B_{0,\beta} v\neq v$.
\section{Bergman-Poincar\'e metric and application}
In what follows, we recall the Bergman-Poincar\'e metric introduced by the second author in \cite{Sn} and we define the notion of BMO (bounded mean oscillation) functions, then we prove Theorem \ref{th2} that gives the uniform continuity of the Berezin transform of any element of the modified Bergman space with respect to the Bergman-Poincar\'e metric (that improves a result given in \cite{BBCZ}).
\subsection{BMO in Bergman-Poincar\'e metric}
To start, let we consider $\alpha>-1,\ -1<\beta\leq 0$ as above. Then it is proved in \cite{Sn} that the function
$$\kappa_{\alpha,\beta}(z)=\log(\mathbb K_{\alpha,\beta}(z,z))=\log(\mathcal K_{\alpha,\beta}(|z|^2))$$ is a $\mathcal C^\infty$ subharmonic function on $\mathbb D$. Thus if we set $\varrho_{\alpha,\beta}^2(z)=\frac{\partial^2 \kappa_{\alpha,\beta}}{\partial z\partial \overline{z}}(z)$ then we obtain
$$\varrho^2_{\alpha,\beta}(z)=\frac{\mathcal K'_{\alpha,\beta}(|z|^2)}{\mathcal K_{\alpha,\beta}(|z|^2)}+|z|^2\frac{\mathcal K''_{\alpha,\beta}(|z|^2)}{\mathcal K_{\alpha,\beta}(|z|^2)}-|z|^2\left(\frac{\mathcal K'_{\alpha,\beta}(|z|^2)}{\mathcal K_{\alpha,\beta}(|z|^2)}\right)^2\geq0.
$$
It follows that one can define a metric (called modified Bergman-Poincar\'e metric) $\mathbf{d}_{\alpha,\beta}$ on $\mathbb D$ by setting $\mathbf{d}_{\alpha,\beta}(z,w):=\inf\ell_{\alpha,\beta}(\gamma)$ for every $z,w\in\mathbb D$ where the infimum is taken over all piecewise continuously differentiable curves $\gamma:[0,1]\longrightarrow \mathbb D$ with $\gamma(0)=z,\ \gamma(1)=w$ and $\ell_{\alpha,\beta}(\gamma) $ is the length of $\gamma$ given by $$\ell_{\alpha,\beta}(\gamma):=\int_0^1\varrho_{\alpha,\beta}(\gamma(s))|\gamma'(s)|ds.
$$
It is showed in \cite{Sn} that the space $(\mathbb D,\mathbf{d}_{\alpha,\beta})$ is a complete metric space.\\

For every $f\in L^2(\mathbb D,d\mu_{\alpha,\beta})$, we define the mean oscillation of $f$ as
$$MO(f)(z)=\left(\mathbb B_{\alpha,\beta}(|f|^2)(z)-|\mathbb B_{\alpha,\beta}|f|(z)|^2\right)^{\frac{1}{2}}.$$
It is easy to see that
$$(MO(f)(z))^2=\frac{1}{2\mathcal K^2_{\alpha,\beta}(|z|^2)}\int_{\mathbb D}\int_{\mathbb D}|f(u)-f(v)|^2|\mathcal K_{\alpha,\beta}(z\overline{u})|^2|\mathcal K_{\alpha,\beta}(z\overline{v})|^2d\mu_{\alpha,\beta}(u)d\mu_{\alpha,\beta}(v).$$
We deduce so that the function  $MO(f)$ is well defined on $\mathbb D$.
\begin{defn}
    We say that $f\in BMO$ if $\|f\|_{BMO}<\infty$ where $$\|f\|_{BMO}=\sup\{MO(f)(z);\ z\in\mathbb D\}.$$
\end{defn}
\subsection{Proof of Theorem \ref{th2}}
To simplify the notation, for every $\xi\in\mathbb D$, we set $$\phi_\xi(z)=\frac{\mathcal K_{\alpha,\beta}(z\overline{\xi})}{\sqrt{\mathcal K_{\alpha,\beta}(|\xi|^2)}}$$
 and $ P_\xi$ the orthogonal projection from $\mathcal A^2(\mathbb D,\mu_{\alpha,\beta})$ onto the one-dimensional subspace spanned by $\phi_\xi$. It follows that for every $f\in \mathcal A^2(\mathbb D,\mu_{\alpha,\beta})$,
 \begin{equation}\label{eq(4.1)}
      P_\xi f(z)=\phi_\xi(z)\langle f,\phi_\xi\rangle_{\alpha,\beta}=\frac{f(\xi)}{\mathcal K_{\alpha,\beta}(|\xi|^2)}\mathcal K_{\alpha,\beta}(z\overline{\xi}).
 \end{equation}

 \begin{prop}
     Let $\gamma(t)$ be a smooth curve in $\mathbb D$. Then $$\left\|(I-P_{\gamma(t)})\left(\frac{d}{dt}\phi_{\gamma(t)}\right)\right\|_{2,\mu_{\alpha,\beta}}=|\gamma'(t)|\varrho_{\alpha,\beta}(\gamma(t)).$$
 \end{prop}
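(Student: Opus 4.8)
The plan is to turn the statement into a pointwise computation by using that $P_{\gamma(t)}$ is the rank-one orthogonal projection onto the \emph{unit} vector $\phi_{\gamma(t)}$, so that $\|(I-P_\xi)u\|^2_{2,\mu_{\alpha,\beta}}=\|u\|^2_{2,\mu_{\alpha,\beta}}-|\langle u,\phi_\xi\rangle_{\alpha,\beta}|^2$ for any $u\in\mathcal A^2(\mathbb D,\mu_{\alpha,\beta})$. Writing $\xi=\gamma(t)$ and viewing $\phi_\xi$ as a function of the independent variables $\xi,\overline\xi$, I would expand
$$\frac{d}{dt}\phi_{\gamma(t)}=\gamma'(t)\frac{\partial\phi_\xi}{\partial\xi}+\overline{\gamma'(t)}\frac{\partial\phi_\xi}{\partial\overline\xi}.$$
Because $\mathcal K_{\alpha,\beta}(z\overline\xi)$ is anti-holomorphic in $\xi$, the holomorphic derivative $\partial_\xi\phi_\xi$ only hits the scalar normalizer $\mathcal K_{\alpha,\beta}(|\xi|^2)^{-1/2}$ and is therefore a scalar multiple of $\phi_\xi$; applying $I-P_\xi$ annihilates it. Thus, with $\psi_\xi:=\partial_{\overline\xi}\phi_\xi$,
$$(I-P_\xi)\frac{d}{dt}\phi_{\gamma(t)}=\overline{\gamma'(t)}\,(I-P_\xi)\psi_\xi,$$
and the whole claim reduces to the pointwise identity $\|(I-P_\xi)\psi_\xi\|^2_{2,\mu_{\alpha,\beta}}=\varrho^2_{\alpha,\beta}(\xi)$.

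To evaluate the right-hand side I would set $k_\xi(z):=\mathcal K_{\alpha,\beta}(z\overline\xi)$ and $g_\xi(z):=\partial_{\overline\xi}k_\xi(z)=z\,\mathcal K'_{\alpha,\beta}(z\overline\xi)$, so that $\psi_\xi$ becomes an explicit linear combination of $g_\xi$ and $k_\xi$ with scalar coefficients. Expanding $\|(I-P_\xi)\psi_\xi\|^2=\|\psi_\xi\|^2-|\langle\psi_\xi,\phi_\xi\rangle_{\alpha,\beta}|^2$ then requires only three inner products: $\langle k_\xi,k_\xi\rangle_{\alpha,\beta}=\mathcal K_{\alpha,\beta}(|\xi|^2)$ (the diagonal of the kernel); $\langle g_\xi,k_\xi\rangle_{\alpha,\beta}=g_\xi(\xi)=\xi\,\mathcal K'_{\alpha,\beta}(|\xi|^2)$ (by the reproducing property, since $g_\xi$ is holomorphic); and $\langle g_\xi,g_\xi\rangle_{\alpha,\beta}$. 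The last is the crux: I would obtain it by differentiating the kernel identity $\langle k_\xi,k_\eta\rangle_{\alpha,\beta}=\mathcal K_{\alpha,\beta}(\eta\overline\xi)$ once in $\overline\xi$ (acting on the linear first slot) and once in $\eta$ (passing into the conjugate-linear second slot as $\partial_{\overline\eta}$), and then setting $\eta=\xi$, which yields $\langle g_\xi,g_\xi\rangle_{\alpha,\beta}=\mathcal K'_{\alpha,\beta}(|\xi|^2)+|\xi|^2\mathcal K''_{\alpha,\beta}(|\xi|^2)$.

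Substituting the three values and simplifying, the scalar coefficients are arranged precisely so that the cross term and the projection term combine to leave
$$\|(I-P_\xi)\psi_\xi\|^2_{2,\mu_{\alpha,\beta}}=\frac{\mathcal K'_{\alpha,\beta}(|\xi|^2)+|\xi|^2\mathcal K''_{\alpha,\beta}(|\xi|^2)}{\mathcal K_{\alpha,\beta}(|\xi|^2)}-|\xi|^2\left(\frac{\mathcal K'_{\alpha,\beta}(|\xi|^2)}{\mathcal K_{\alpha,\beta}(|\xi|^2)}\right)^2,$$
which is exactly $\partial_\xi\partial_{\overline\xi}\log\mathcal K_{\alpha,\beta}(|\xi|^2)=\varrho^2_{\alpha,\beta}(\xi)$ as defined before the proposition; taking square roots and restoring the factor $|\overline{\gamma'(t)}|=|\gamma'(t)|$ finishes the proof. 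I expect the main obstacle to be the computation of $\langle g_\xi,g_\xi\rangle_{\alpha,\beta}$: one must justify differentiating the inner product under the integral sign (legitimate here because $\mathcal K_{\alpha,\beta}$ is given by a power series converging locally uniformly on $\mathbb D$, so $g_\xi\in\mathcal A^2(\mathbb D,\mu_{\alpha,\beta})$ and the differentiations commute with the integral), and to keep careful track of which Wirtinger derivative acts in the conjugate-linear second slot. The remaining bookkeeping of the scalar coefficients is routine.
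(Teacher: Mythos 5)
Your proof is correct, and I checked the bookkeeping you left implicit: writing $A=\mathcal K_{\alpha,\beta}(|\xi|^2)$, $B=\mathcal K'_{\alpha,\beta}(|\xi|^2)$, $C=\mathcal K''_{\alpha,\beta}(|\xi|^2)$, your three inner products give $\langle \psi_\xi,\phi_\xi\rangle_{\mu_{\alpha,\beta}}=\frac{1}{2}\xi B/A$ and $\|\psi_\xi\|^2_{2,\mu_{\alpha,\beta}}=(B+|\xi|^2C)/A-\frac{3}{4}|\xi|^2B^2/A^2$, so the Pythagorean identity leaves exactly $(B+|\xi|^2C)/A-|\xi|^2B^2/A^2=\varrho^2_{\alpha,\beta}(\xi)$, as you claim.

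Your route is genuinely different in organization from the paper's. The paper does not split $\frac{d}{dt}$ into Wirtinger derivatives: it differentiates $\phi_{\gamma(t)}$ in full, computes $P_{\gamma(t)}\bigl(\frac{d}{dt}\phi_{\gamma(t)}\bigr)$ from the reproducing formula \eqref{eq(4.1)}, writes the difference as $\mathscr A_t-\mathscr D_t$ (with $\mathscr A_t$ the term involving $z\mathcal K'_{\alpha,\beta}(z\overline{\gamma(t)})$ and $\mathscr D_t$ a multiple of the kernel), and then expands $\|\mathscr A_t-\mathscr D_t\|^2$, checking by hand that $\langle\mathscr A_t,\mathscr D_t\rangle_{\mu_{\alpha,\beta}}=\|\mathscr D_t\|^2_{2,\mu_{\alpha,\beta}}$ so that the square of the norm collapses to $\|\mathscr A_t\|^2-\|\mathscr D_t\|^2$. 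Your observation that $\partial_\xi\phi_\xi$ is parallel to $\phi_\xi$, hence annihilated by $I-P_\xi$, combined with the rank-one Pythagoras identity $\|(I-P_\xi)u\|^2=\|u\|^2-|\langle u,\phi_\xi\rangle|^2$, produces the same cancellation with no computation. The main divergence is the crux term: the paper evaluates $\|\mathscr A_t\|^2$ (your $|\gamma'(t)|^2\|g_\xi\|^2/A$) by monomial orthogonality, summing the resulting series to $\frac{\alpha+\beta+2}{\beta+1}\,{}_3F_2(2,2,\alpha+\beta+3;1,\beta+2;|\gamma(t)|^2)$ and then recognizing that series as $\mathcal K'_{\alpha,\beta}(x)+x\mathcal K''_{\alpha,\beta}(x)$ at $x=|\gamma(t)|^2$, whereas you obtain $\|g_\xi\|^2_{2,\mu_{\alpha,\beta}}=B+|\xi|^2C$ directly by differentiating $\langle k_\xi,k_\eta\rangle_{\mu_{\alpha,\beta}}=\mathcal K_{\alpha,\beta}(\eta\overline{\xi})$ once in $\overline\xi$ and once in $\eta$, then setting $\eta=\xi$. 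What your argument buys: it avoids hypergeometric manipulations entirely, uses the power series of $\mathcal K_{\alpha,\beta}$ only to justify differentiating under the integral sign (which you rightly flag, and which is also what legitimizes the paper's term-by-term integration), and it works verbatim in any reproducing-kernel Hilbert space, making transparent that the answer must be $\partial_\xi\partial_{\overline\xi}\log K(\xi,\xi)$. What the paper's computation buys is that it stays inside the explicit series toolkit used throughout the paper and records the ${}_3F_2$ identity as a by-product. Both proofs ultimately rest on the same two applications of the reproducing property, to $k_\xi$ and to $g_\xi$, so the underlying content is equivalent.
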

 \begin{proof}
     Since $$\phi_{\gamma(t)}(z)=\frac{\mathcal K_{\alpha,\beta}(z\overline{\gamma(t)})}{\sqrt{\mathcal K_{\alpha,\beta}(|\gamma(t)|^2)}}$$ then
     \begin{equation}\label{eq(4.2)}
         \frac{d}{dt}\phi_{\gamma(t)}(z)=\frac{\mathcal K'_{\alpha,\beta}(z\overline{\gamma(t)})}{\sqrt{\mathcal K_{\alpha,\beta}(|\gamma(t)|^2)}} z\overline{\gamma'(t)}-\frac{1}{2} \frac{\mathcal K_{\alpha,\beta}(z\overline{\gamma(t)})\mathcal K'_{\alpha,\beta}(|\gamma(t)|^2)}{\mathcal K^{\frac{3}{2}}_{\alpha,\beta}(|\gamma(t)|^2)}(\gamma'(t)\overline{\gamma(t)}+\gamma(t)\overline{\gamma'(t)}).
     \end{equation}

     By \eqref{eq(4.1)}-\eqref{eq(4.2)}, we obtain
     \begin{equation}\label{eq(4.3)}
         \begin{array}{lcl}
         \ds P_{\gamma(t)}\left(\frac{d}{dt}\phi_{\gamma(t)}\right)(z)&=&\ds \frac{d}{dt}\phi_{\gamma(t)}(\gamma(t))\frac{\mathcal K_{\alpha,\beta}(z\overline{\gamma(t)})}{\mathcal K_{\alpha,\beta}(|\gamma(t)|^2)}\\
         &=&\ds \frac{\mathcal K_{\alpha,\beta}(z\overline{\gamma(t)})\mathcal K'_{\alpha,\beta}(|\gamma(t)|^2)}{2\mathcal K^{\frac{3}{2}}_{\alpha,\beta}(|\gamma(t)|^2)}(\gamma(t)\overline{\gamma'(t)}-\gamma'(t)\overline{\gamma(t)}).
         \end{array}
     \end{equation}
     Combine \eqref{eq(4.2)} with  \eqref{eq(4.3)} to find
     $$\begin{array}{lcl}
     \ds(I-P_{\gamma(t)})\left(\frac{d}{dt}\phi_{\gamma(t)}\right)(z)&=&\ds \frac{\mathcal K'_{\alpha,\beta}(z\overline{\gamma(t)})}{\sqrt{\mathcal K_{\alpha,\beta}(|\gamma(t)|^2)}} z\overline{\gamma'(t)}- \frac{\mathcal K_{\alpha,\beta}(z\overline{\gamma(t)})\mathcal K'_{\alpha,\beta}(|\gamma(t)|^2)}{\mathcal K^{\frac{3}{2}}_{\alpha,\beta}(|\gamma(t)|^2)}\gamma(t)\overline{\gamma'(t)}\\
     &=:&\mathscr A_t(z)-\mathscr D_t(z).
     \end{array}
     $$
     Hence,
     \begin{equation}\label{eq(4.4)}
         \left\|(I-P_{\gamma(t)})\left(\frac{d}{dt}\phi_{\gamma(t)}\right)\right\|^2_{2,\mu_{\alpha,\beta}}=\|\mathscr A_t\|^2_{2,\mu_{\alpha,\beta}}+\|\mathscr D_t\|^2_{2,\mu_{\alpha,\beta}}-2\Re\langle \mathscr A_t,\mathscr D_t\rangle_{\mu_{\alpha,\beta}}.
     \end{equation}
     It is not hard to see that
     \begin{equation}\label{eq(4.5)}
         \|\mathscr D_t\|^2_{2,\mu_{\alpha,\beta}}=\langle \mathscr A_t,\mathscr D_t\rangle_{\mu_{\alpha,\beta}}=\left(\frac{\mathcal K'_{\alpha,\beta}(|\gamma(t)|^2)}{\mathcal K_{\alpha,\beta}(|\gamma(t)|^2)}\right)^2|\gamma(t)|^2|\overline{\gamma'(t)}|^2
     \end{equation}
      where we use the fact that the function $z\longmapsto z\mathcal K'_{\alpha,\beta}(z\overline{\gamma(t)})$ is in $\mathcal A^2(\mathbb D,\mu_{\alpha,\beta})$.\\
     Now using the definition and the orthogonality, we obtain
     \begin{equation}\label{eq(4.6)}
         \begin{array}{lcl}
         \ds\|\mathscr A_t\|^2_{2,\mu_{\alpha,\beta}} &=&\ds \frac{|\gamma'(t)|^2}{\mathcal K_{\alpha,\beta}(|\gamma(t)|^2)}\int_{\mathbb D}\left|z\mathcal K'_{\alpha,\beta}(z\overline{\gamma(t)})\right|^2d\mu_{\alpha,\beta}(z)\\&=&\ds \frac{|\gamma'(t)|^2}{\mathcal K_{\alpha,\beta}(|\gamma(t)|^2)}\int_{\mathbb D}\left(\sum_{n,m=1}^{+\infty}nm\frac{(\alpha+\beta+2)_n (\alpha+\beta+2)_m}{(\beta+1)_n (\beta+1)_m}(\gamma(t))^{n-1}(\overline{\gamma(t)})^{m-1}z^n\overline{z}^m\right) d\mu_{\alpha,\beta}(z)\\
         &=&\ds \frac{|\gamma'(t)|^2}{\mathcal K_{\alpha,\beta}(|\gamma(t)|^2)} \sum_{n=1}^{+\infty} \left(\frac{n(\alpha+\beta+2)_n }{(\beta+1)_n }\right)^2|\gamma(t)|^{2n-2}\int_{\mathbb D}|z|^{2n} d\mu_{\alpha,\beta}(z)\\
         &=&\ds \frac{|\gamma'(t)|^2}{\mathcal K_{\alpha,\beta}(|\gamma(t)|^2)} \sum_{n=1}^{+\infty} n^2\frac{(\alpha+\beta+2)_n }{(\beta+1)_n }|\gamma(t)|^{2n-2}\\
         &=&\ds \frac{|\gamma'(t)|^2}{\mathcal K_{\alpha,\beta}(|\gamma(t)|^2)}\frac{\alpha+\beta+2}{\beta+1} \sum_{k=0}^{+\infty} (k+1)^2\frac{(\alpha+\beta+3)_k }{(\beta+2)_k }|\gamma(t)|^{2k}\\
         &=&\ds \frac{|\gamma'(t)|^2}{\mathcal K_{\alpha,\beta}(|\gamma(t)|^2)}\frac{\alpha+\beta+2}{\beta+1} \ _3F_2\left(\left.
                \begin{array}{c}
                    2,\ 2,\ \alpha+\beta+3\\
                    1,\ \beta+2
                \end{array}\right||\gamma(t)|^2\right).
         \end{array}
     \end{equation}
     Combining  \eqref{eq(4.4)}, \eqref{eq(4.5)} and \eqref{eq(4.6)} we conclude that
     $$\begin{array}{l}
     \ds \left\|(I-P_{\gamma(t)})\left(\frac{d}{dt}\phi_{\gamma(t)}\right)\right\|^2_{2,\mu_{\alpha,\beta}}=\ds \|\mathscr A_t\|^2_{2,\mu_{\alpha,\beta}}-\|\mathscr D_t\|^2_{2,\mu_{\alpha,\beta}}\\
     =\ds |\gamma'(t)|^2\left[\frac{\alpha+\beta+2}{(\beta+1)\mathcal K_{\alpha,\beta}(|\gamma(t)|^2)}\ _3F_2\left(\left.
                \begin{array}{c}
                    2,\ 2,\ \alpha+\beta+3\\
                    1,\ \beta+2
                \end{array}\right||\gamma(t)|^2\right)
        -\left(\frac{\mathcal K'_{\alpha,\beta}(|\gamma(t)|^2)}{\mathcal K_{\alpha,\beta}(|\gamma(t)|^2)}\right)^2|\gamma(t)|^2\right]\\
      =\ds |\gamma'(t)|^2\left[ \frac{1}{\mathcal K_{\alpha,\beta}(|\gamma(t)|^2)}\left(\mathcal K'_{\alpha,\beta}(|\gamma(t)|^2)+|\gamma(t)|^2\mathcal K''_{\alpha,\beta}(|\gamma(t)|^2)\right) -\left(\frac{\mathcal K'_{\alpha,\beta}(|\gamma(t)|^2)}{\mathcal K_{\alpha,\beta}(|\gamma(t)|^2)}\right)^2|\gamma(t)|^2\right]\\
      =\ds |\gamma'(t)|^2\varrho^2_{\alpha,\beta}(|\gamma(t)|^2).
     \end{array}
     $$

 \end{proof}
 \begin{lem} \label{l3}
     Let $f\in BMO$ and $\gamma(t)$ be a smooth curve in $\mathbb D$. Then
     $$\left|\frac{d}{dt}\mathbb B_{\alpha,\beta}f(\gamma(t))\right|\leq 2MO(f)(\gamma(t))|\gamma'(t)|\varrho_{\alpha,\beta}(\gamma(t)).$$
 \end{lem}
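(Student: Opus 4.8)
The plan is to write the Berezin transform along the curve as an inner product and then differentiate it. Since $\frac{|\mathcal K_{\alpha,\beta}(w\overline z)|^2}{\mathcal K_{\alpha,\beta}(|z|^2)}=|\phi_z(w)|^2$, we have $\mathbb B_{\alpha,\beta}f(z)=\langle f\phi_z,\phi_z\rangle_{\mu_{\alpha,\beta}}$, hence
$$\mathbb B_{\alpha,\beta}f(\gamma(t))=\left\langle f\,\phi_{\gamma(t)},\phi_{\gamma(t)}\right\rangle_{\mu_{\alpha,\beta}}.$$
Because $f\in L^2(\mathbb D,\mu_{\alpha,\beta})$ and $t\mapsto\phi_{\gamma(t)}$ is smooth with locally bounded $t$-derivative (the kernel $\mathcal K_{\alpha,\beta}$ being smooth on $\mathbb D$), one may differentiate under the integral sign to obtain, writing $\psi_t:=\frac{d}{dt}\phi_{\gamma(t)}$,
$$\frac{d}{dt}\mathbb B_{\alpha,\beta}f(\gamma(t))=\langle f\psi_t,\phi_{\gamma(t)}\rangle_{\mu_{\alpha,\beta}}+\langle f\phi_{\gamma(t)},\psi_t\rangle_{\mu_{\alpha,\beta}}.$$

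Next I would remove the diagonal contribution. Decompose $\psi_t=P_{\gamma(t)}\psi_t+(I-P_{\gamma(t)})\psi_t$, where $P_{\gamma(t)}\psi_t=\langle\psi_t,\phi_{\gamma(t)}\rangle_{\mu_{\alpha,\beta}}\,\phi_{\gamma(t)}$ since $\phi_{\gamma(t)}$ is a unit vector. Substituting and using $\langle f\phi_{\gamma(t)},\phi_{\gamma(t)}\rangle_{\mu_{\alpha,\beta}}=\mathbb B_{\alpha,\beta}f(\gamma(t))$, the two diagonal terms combine into $2\,\Re\langle\psi_t,\phi_{\gamma(t)}\rangle_{\mu_{\alpha,\beta}}\,\mathbb B_{\alpha,\beta}f(\gamma(t))$. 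But differentiating the identity $\|\phi_{\gamma(t)}\|^2_{2,\mu_{\alpha,\beta}}=1$ gives $2\,\Re\langle\psi_t,\phi_{\gamma(t)}\rangle_{\mu_{\alpha,\beta}}=0$, so this contribution vanishes. Setting $\eta_t:=(I-P_{\gamma(t)})\psi_t$, which is orthogonal to $\phi_{\gamma(t)}$, we are left with
$$\frac{d}{dt}\mathbb B_{\alpha,\beta}f(\gamma(t))=\langle f\eta_t,\phi_{\gamma(t)}\rangle_{\mu_{\alpha,\beta}}+\langle f\phi_{\gamma(t)},\eta_t\rangle_{\mu_{\alpha,\beta}}.$$

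The final step exploits $\eta_t\perp\phi_{\gamma(t)}$ to centre $f$. For the constant $c:=\mathbb B_{\alpha,\beta}f(\gamma(t))$ one has $\langle c\,\eta_t,\phi_{\gamma(t)}\rangle_{\mu_{\alpha,\beta}}=0$ and $\langle c\,\phi_{\gamma(t)},\eta_t\rangle_{\mu_{\alpha,\beta}}=0$, so $f$ may be replaced by $\tilde f:=f-c$ in both inner products without changing their values. Cauchy--Schwarz then yields
$$\left|\frac{d}{dt}\mathbb B_{\alpha,\beta}f(\gamma(t))\right|\leq 2\,\|\tilde f\,\phi_{\gamma(t)}\|_{2,\mu_{\alpha,\beta}}\,\|\eta_t\|_{2,\mu_{\alpha,\beta}}.$$
To conclude I would identify the two norms: a direct expansion of $\|\tilde f\phi_{\gamma(t)}\|^2_{2,\mu_{\alpha,\beta}}=\mathbb B_{\alpha,\beta}(|f-c|^2)(\gamma(t))$, together with $c=\mathbb B_{\alpha,\beta}f(\gamma(t))$, collapses to $\mathbb B_{\alpha,\beta}(|f|^2)(\gamma(t))-|\mathbb B_{\alpha,\beta}f(\gamma(t))|^2=(MO(f)(\gamma(t)))^2$, while $\|\eta_t\|_{2,\mu_{\alpha,\beta}}=|\gamma'(t)|\varrho_{\alpha,\beta}(\gamma(t))$ is exactly the content of the preceding Proposition. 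This produces the stated estimate.

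The main obstacle is the algebraic bookkeeping that makes the diagonal part cancel and then legitimizes the recentering $f\mapsto f-\mathbb B_{\alpha,\beta}f(\gamma(t))$: the cancellation relies on $\phi_{\gamma(t)}$ being normalized for every $t$ (so that $\Re\langle\psi_t,\phi_{\gamma(t)}\rangle_{\mu_{\alpha,\beta}}=0$), and the recentering is valid only because the surviving terms pair against $\eta_t$, which is orthogonal to $\phi_{\gamma(t)}$. Secondary care is needed to justify differentiation under the integral sign, which I would handle by dominated convergence using the smoothness of $\mathcal K_{\alpha,\beta}$ on $\mathbb D$ together with $f\in L^2(\mathbb D,\mu_{\alpha,\beta})$.
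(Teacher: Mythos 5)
Your proof is correct and is essentially the paper's own argument: your final identity (the two inner products against $(I-P_{\gamma(t)})\bigl(\frac{d}{dt}\phi_{\gamma(t)}\bigr)$ with $f$ recentered by $\mathbb B_{\alpha,\beta}f(\gamma(t))$) is precisely the differentiation formula the paper imports from Hedenmalm--Korenblum--Zhu, Theorem 2.22, after which both proofs conclude identically by Cauchy--Schwarz, the preceding Proposition, and the identity $\bigl\|(f-\mathbb B_{\alpha,\beta}f(\gamma(t)))\phi_{\gamma(t)}\bigr\|_{2,\mu_{\alpha,\beta}}=MO(f)(\gamma(t))$. The only difference is that you derive that formula from scratch (differentiating under the integral, using $\|\phi_{\gamma(t)}\|_{2,\mu_{\alpha,\beta}}=1$ to kill the diagonal term, and orthogonality to justify the recentering), whereas the paper simply cites it.
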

 \begin{proof}
     Using the same argument as in the proof of \cite[Th. 2.22]{He-Ko-Zh} modulo some simple modifications, one can deduce that
     $$\frac{d}{dt}\mathbb B_{\alpha,\beta}f(\gamma(t))=2\int_{\mathbb D}\left(f(w)-\mathbb B_{\alpha,\beta}f(\gamma(t))\right)\Re \left((I-P_{\gamma(t)})\left(\frac{d}{dt}\phi_{\gamma(t)}\right)(w)\overline{\phi_{\gamma(t)}(w)}\right)d\mu_{\alpha,\beta}(w).$$
     Hence, thanks to the Cauchy-Schwarz inequality,
     $$\begin{array}{lcl}
        \ds\left|\frac{d}{dt}\mathbb B_{\alpha,\beta}f(\gamma(t))\right|&\leq&
        \ds 2\int_{\mathbb D}|f(w)-\mathbb B_{\alpha,\beta}f(\gamma(t))|\ |\phi_{\gamma(t)}(w)|\left|(I-P_{\gamma(t)})\left(\frac{d}{dt}\phi_{\gamma(t)}\right)(w)\right|d\mu_{\alpha,\beta}(w)\\
        &\leq&\ds 2\Bigl\|(f-\mathbb B_{\alpha,\beta}f(\gamma(t)))\phi_{\gamma(t)}\Bigr\|_{2,\mu_{\alpha,\beta}}\left\|(I-P_{\gamma(t)})\left(\frac{d}{dt}\phi_{\gamma(t)}\right)\right\|_{2,\mu_{\alpha,\beta}}
     \end{array}$$
     To conclude the result, it suffices to apply the previous proposition and use the fact that
     $$\Bigl\|(f-\mathbb B_{\alpha,\beta}f(\gamma(t)))\phi_{\gamma(t)}\Bigr\|_{2,\mu_{\alpha,\beta}}=MO(f)(\gamma(t)).$$
 \end{proof}
Now we can prove Theorem \ref{th2} by proving that if $f\in BMO$, then  we have
    $$|\mathbb B_{\alpha,\beta}f(z)-\mathbb B_{\alpha,\beta}f(w)|\leq 2\|f\|_{BMO}\mathbf{d}_{\alpha,\beta}(z,w)
    $$
    for all $z,w\in\mathbb D$.
    \begin{proof}
        Let $f\in BMO$ and $z,w\in\mathbb D$. Let $\gamma:[0,1]\longrightarrow \mathbb D$ be a smooth curve with $\gamma(0)=z$ and $\gamma(1)=w$. By Lemma \ref{l3},
        $$\begin{array}{lcl}
        \ds |\mathbb B_{\alpha,\beta}f(z)-\mathbb B_{\alpha,\beta}f(w)|&=&\ds \left|\int_0^1 \frac{d}{dt}\mathbb B_{\alpha,\beta}f(\gamma(t))dt\right|\leq\ds 2\int_0^1MO(f)(\gamma(t))|\gamma'(t)|\varrho_{\alpha,\beta}(\gamma(t))\\
        &\leq&\ds 2\|f\|_{BMO}\int_0^1|\gamma'(t)|\varrho_{\alpha,\beta}(\gamma(t))=2\|f\|_{BMO}\ell_{\alpha,\beta}(\gamma)
        \end{array}$$
        The result follows by taking the infimum on all smooth curves $\gamma$.
    \end{proof}


\begin{thebibliography}
{X-XX1}
\bibitem{BBCZ}\textbf{D. B\'ekoll\'e, C. A. Berger, L. A. Coburn and K. H. Zhu}, BMO in the Bergman Metric on Bounded Symmetric Domains, Journal of  Functional Analysis, 93 (1990), 310-350.
\bibitem{Be}\textbf{F.A. Berezin}, Covariant and contravariant symbols of operators, Math. USSR-Izv. 6 (1972), 1117-1151.
\bibitem{Do}\textbf{M. Dostani\'c}, Norm of Berezin transform on $L^p$ space, Journal d'Analyse Math\'ematique, 104 (2008), 13-23.
\bibitem{AG-Sn}\textbf{A. Ghanmi and S. Snoun}, Integral Representations of Bargmann Type for the $\beta-$Modified Bergman Space on Punctured Unit Disc, Bull. Malays. Math. Sci. Soc. 45 (2022), 1367-1381.
\bibitem{Gh-Sn}\textbf{N. Ghiloufi and S. Snoun}, Zeros of new Bergman kernels, J. Korean Math. Soc. 59 (2022), 449-468.
\bibitem{Gh-Za}\textbf{N. Ghiloufi and M. Zaway}, Meromorphic Bergman spaces, Ukrainian Mathematical Journal  74 (2023), 1209-1224.
\bibitem{He-Ko-Zh}\textbf{H. Hedenmalm, B. Korenblum and K. Zhu}, Theory of Bergman spaces, Graduate texts in Mathematics, 199 (2000).
\bibitem{Ma-Ob-So}\textbf{W. Magnus, F. Oberhettinger, R. P. Soni}, Formulas and Theorems for the Special Functions of Mathematical Physics, Springer-Verlag Berlin Heidelberg GmbH (1966).
\bibitem{Sn}\textbf{S. Snoun}, Toeplitz operators via Carleson measures on $\beta-$modified  Bergman spaces, Preprint: arXiv:2310.03519, (2023), 1-20.
\end{thebibliography}
\end{document}